\crefname{figure}{Figure}{Figures}
\numberwithin{equation}{section}
\newtheorem{theorem}{Theorem}[section]
\newtheorem{lemma}[theorem]{Lemma}
\newtheorem{corollary}[theorem]{Corollary}
\newtheorem{definition}[theorem]{Definition}
\newtheorem{example}[theorem]{Example}
\newtheorem{proposition}[theorem]{Proposition}
\newtheorem{remark}[theorem]{Remark}
\crefname{theorem}{Theorem}{Theorems}
\crefname{proposition}{Proposition}{Propositions}
\crefname{corollary}{Corollary}{Corollaries}
\crefname{example}{Example}{Examples}
\crefname{definition}{Definition}{Definitions}
\crefname{lemma}{Lemma}{Lemmas}
\crefname{section}{Section}{Sections}
\DeclareMathOperator{\Pf}{Pf}
\DeclareMathOperator{\sgn}{sgn}
\newcommand{\ds}{\displaystyle}
\title{The monopole-dimer model on Cartesian products of plane graphs}
\author{Anita Arora}
\address{Anita Arora, Department of Mathematics, Indian Institute of Science, Bangalore  560012, India.}
\email{anitaarora@iisc.ac.in}
\author{Arvind Ayyer}
\address{Arvind Ayyer, Department of Mathematics, Indian Institute of Science, Bangalore  560012, India.}
\email{arvind@iisc.ac.in}
\date{\today}
\begin{document}

\begin{abstract}
The monopole-dimer model is a signed variant of the monomer-dimer model which has determinantal structure.
We extend the monopole-dimer model for planar graphs 
 (\emph{Math. Phys. Anal. Geom.}, 2015) to Cartesian products thereof and show that the partition function of this model can be expressed as a determinant of a generalised signed adjacency matrix.
We then show that the partition function is independent of the orientations of the planar graphs so long as the orientations are Pfaffian.
When these planar graphs are bipartite, we show that the computation of the partition function becomes especially simple.
We then give an explicit product formula for the partition function of three-dimensional grid graphs a la Kasteleyn and Temperley--Fischer, which turns out to be fourth power of a polynomial when all grid lengths are even. Finally, we generalise this product formula to $d$ dimensions, again obtaining an explicit product formula.
We conclude with a discussion on asymptotic formulas for the free energy and monopole densities. 
\end{abstract}

\subjclass[2010]{82B20, 82B23, 05A15, 05C70}
\keywords{Monopole-dimer model, Cartesian products, Determinantal formula, Kasteleyn orientation, Bipartite, Cycle decomposition, Partition function, Grid graphs, Free energy.}

\maketitle

\section{Introduction}
The \emph{dimer model} originally arose as the study of the physical process of adsorption of diatomic molecules (like oxygen) on the surface of a solid. Abstractly it can be thought of as enumerating perfect matchings in an edge-weighted graph. For planar graphs, Kasteleyn~\cite{Kasteleyn1963} solved the problem completely by showing that the partition function can be written as a Pfaffian of a certain adjacency matrix built using a special class of orientations called Pfaffian orientations on the graph. An immediate corollary of Kasteleyn's result is that the Pfaffian is independent of the orientation.
For the case of two-dimensional grid graphs $Q_{m,n}$, Kasteleyn~\cite{KASTELEYN19611209}
and Temperley--Fisher \cite{Fisher,TemperleyFisher} independently gave an explicit product formula. For example, when $m$ and $n$ are even,
horizontal (resp. vertical) edges have weight $a$ (resp. $b$), 
 the partition function can be written as
\begin{equation}
\label{eqn:dimer-pf}
2^{mn/2} \prod_{i=1}^{m/2} \prod_{j=1}^{n/2}
\left( a^2 \cos^2 \frac{i \pi}{m+1} + b^2 \cos^2 \frac{j \pi}{n+1}
\right).
\end{equation}
This formula is remarkable because although each factor is a degree-two polynomial in $a$ and $b$ with not-necessarily rational coefficients, the product turns out to be a polynomial with nonnegative integer coefficients. In particular, when $a = b = 1$, it is not obvious from this formula that the resulting product is an integer.

There have been attempts to generalise the dimer model while preserving this nice structure. The natural physical generalisation is the \emph{monomer-dimer model}, which represents adsorption of a gas cloud consisting of both monoatomic and diatomic molecules. 
The abstract version here is the {(weighted)} enumeration of all matchings of a graph. {The weights are interpreted as energies and are positive real numbers.}
This is known to be a computationally difficult problem~\cite{jerrum1987} and the partition function here does not have such a clean formula. However, when there is a single monomer on the boundary of a plane graph, the partition function can indeed be written as a Pfaffian~\cite{wu2006}.
A lower bound for the partition function of the monomer-dimer model for $d$-dimensional grid graphs has been obtained by Hammersley--Menon~\cite{hammersley-menon-1970} by generalising the method of Kasteleyn and Temperley--Fisher.

In another direction, a signed version of the monomer-dimer model called the \emph{monopole-dimer model} has been 
introduced~\cite{Ayyer2015ASM} for planar graphs. Configurations of the monopole-dimer model can be thought of as superpositions of two monomer-dimer configurations having monomers (called monopoles there) at the same locations. Thus, one ends up with even loops and isolated vertices. What makes the monopole-dimer model less physical is that configurations have a signed weight {and they cannot be interpreted as energies anymore}. On the other hand, the partition function here can be expressed as a determinant. Moreover, it is a perfect square for a $2m \times 2n$ grid graph. A combinatorial interpretation of the square root is given in~\cite{ayyer-2020}.

{In \cite{Ayyer2015ASM}, a more general model called the \emph{loop-vertex model} has also been defined for a general graph together with an orientation. The partition function in this case can also be written as a determinant. However, this model depends on the orientation. One of the main motivations for this work is to find natural families of non-planar graphs where the partition function is independent of the orientation, just as in the monopole-dimer model. 
The second motivation comes from the intuition that the monopole-dimer model is an `integrable variant' of the more physical monomer-dimer model. If this is correct,  asymptotic properties of both models should be similar.
This has been explained in \cite{Ayyer2015ASM} for two dimensional grid. We expect this to hold for high-dimensional grids also. This is not easy to see because of the signs in monopole-dimer weights. We hope that our work will be a starting point towards establishing this relationship between the two models.
We note in passing that higher dimensional dimer models have started attracting attention; see \cite{CSW-2023,HLT-2023} for example.
}

We formulate the monopole-dimer model for Cartesian products of plane graphs in \cref{sec:MainRe}.
A key ingredient in the formulation is the construction of special directed cycle decompositions of certain projections, which are themselves plane graphs with parallel edges.
We first show {in \cref{thm:DetFor2}} that the partition function is a determinant of a generalised adjacency matrix built using Pfaffian orientations. As in the dimer model, we see immediately {in \cref{cor:kProdisOrinInd}} that the determinant is independent of the orientation. In \cref{sec:MDMbipartite case}, we focus attention on the monopole-dimer model on the Cartesian product of bipartite plane graphs. 
Here, we will show in {\cref{thm:IndCycDecom}} that we can allow arbitrary cycle decompositions of the projections mentioned above. This seems to be a new observation of independent interest.

We then focus on the special family of grid graphs in higher dimensions. We give an explicit product formula for the partition function of the monopole-dimer model on three-dimensional grid graphs in {\cref{thm:PF3Dgrid}} {generalising the expression} \eqref{eqn:dimer-pf}. One peculiar feature of this partition function is that it is a fourth power of a polynomial when all side lengths are even. Just as for the partition function of the monopole-dimer model for two-dimensional grids, it would be interesting to obtain a combinatorial interpretation of the fourth root.
We then briefly discuss the higher dimensional case in \cref{sec:HDgridGraph} and give a similar explicit product formula {in \cref{thm:PFkDgrid}}. We will also discuss its asymptotic behaviour in \cref{sec:asym}.

We begin with the background definitions and previous results.

\section{Dimer Model}
\label{sec:MMDM}
We begin by recalling basic terminology from graph theory.
A {\emph{(simple) graph}} is an ordered pair $G = (V(G), E(G))$, where $V(G)$ is the set of \emph{vertices} of $G$ and $E(G)$ is a collection of two-element subsets of $V(G)$, known as \emph{edges}.
{When we allow multiple edges between a pair of vertices (also called \emph{parallel edges}), we will call such objects \emph{multigraphs}.
We will never allow {self loops}.}
We will work with undirected graphs and we will always assume that the graphs are finite and {naturally vertex-labeled} from $\{1,2,\dots,|V(G)|\}$.  {A simple graph is therefore a graph with no parallel edges. All our models will be defined on simple graphs. We will encounter graphs with parallel edges only in certain decompositions.} 
Recall that a \textit{planar} graph is a graph which can be embedded in the plane, i.e. it can be drawn in such a way that no edges will cross each other. Such an embedding of a planar graph is referred as a \textit{plane graph} and it divides the whole plane into regions, each of which is called a \textit{face}. We will consider only those embeddings of the graph for which parallel edges do not enclose any vertex.
An \textit{orientation} on a graph $G$ is the assignment of arrows to its edges. A graph $G$ with an orientation $\mathcal{O}$ is called an \textit{oriented graph} and is denoted $(G,\mathcal{O})$. An orientation on a labeled graph obtained by orienting its edges from lower to higher labeled vertex is called a \textit{canonical orientation}. 

\begin{definition}
An orientation on a plane graph $G$ is said to be \emph{Pfaffian} if it satisfies the property that each simple loop enclosing a bounded face has an odd number of clockwise oriented edges.
A Pfaffian orientation is said to possess the \textit{clockwise-odd property}. 
\end{definition}

\begin{figure}[h!]
    \centering
    \includegraphics[scale=1]{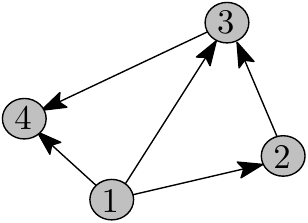}
    \caption{An oriented graph on $4$ vertices.}
    \label{fig:my_label8}
\end{figure}

For example, the orientation in \cref{fig:my_label8} is a Pfaffian orientation. Kasteleyn has shown that every plane graph has a Pfaffian orientation~\cite{Kasteleyn1963}. A \textit{dimer covering} or \emph{perfect matching} is a collection of edges in the graph $G$ such that each vertex is covered in exactly one edge. 
The set of all dimer coverings of $G$ will be denoted as $\mathcal{M}(G)$.
Let $G$ be an edge-weighted graph on $2n$ vertices with {real positive} weight $w_e$ for $e \in E(G)$ ({thought of as the energy of $e$}). Then the \textit{dimer model} is the collection of all dimer coverings where the weight of each dimer covering $M\in \mathcal{M}(G)$ is given by $w(M)=\prod_{e \in M}w_e$.
The \textit{partition function} of the dimer model on $G$ is then defined as
\begin{equation*}
    \mathrm{Z}_G:=\sum_{M \in \mathcal{M}(G)} w(M).
\end{equation*}
To state Kasteleyn's celebrated result, recall that {a matrix $A=(a_{i,j})$ is skew-symmetric if $a_{i,j}=-a_{j,i}$ for every $i,j$, and }the \textit{Pfaffian} of $2n \times 2n$ skew-symmetric matrix $A$ is given by
\begin{equation*}
\Pf(A)=\frac{1}{2^n n!} \sum_{\sigma \in  S_{2n}} \sgn(\sigma)\, A_{\sigma_1,\sigma_2} A_{\sigma_3,\sigma_4} \dots A_{\sigma_{2n-1},\sigma_{2n}},
\end{equation*}
and Cayley's theorem~\cite{Cayley1849} says that for such a matrix, ${\det A} = \Pf(A)^2$.

\begin{theorem}[Kasteleyn~\cite{Kasteleyn1963}] 
If $G$ is a plane graph with Pfaffian orientation $\mathcal{O}$, then the partition function of the dimer model on $G$ is given by $\mathrm{Z}_G=\Pf(K_G)$,
where $K_G$ is a signed adjacency matrix defined by
\begin{align*}
    (K_G)_{u,v}&=\begin{cases}
        w_e &  u\rightarrow v \text{ in }\mathcal{O},\\
      -w_e & v\rightarrow u \text{ in } \mathcal{O},\\
        \,0 &\text{otherwise.}
    \end{cases}
\end{align*}
\end{theorem}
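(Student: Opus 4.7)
The plan is to expand $\Pf(K_G)$ as a signed sum over perfect matchings and then use the Pfaffian property of $\mathcal{O}$ to show every matching contributes with the same sign. First, I group the $(2n)!$ permutations in the definition of the Pfaffian according to the unordered pair partition $M=\{\{\sigma_1,\sigma_2\},\dots,\{\sigma_{2n-1},\sigma_{2n}\}\}$ they encode; the $2^n n!$ permutations sharing a given partition are identified using the antisymmetry of $K_G$ and the signature rule, collapsing the sum to $\Pf(K_G) = \sum_{M \in \mathcal{M}(G)} \epsilon(M)\, w(M)$, where $\epsilon(M) \in \{+1,-1\}$ is a sign built from the orientations of the edges of $M$ in $\mathcal{O}$ together with the sign of the permutation sorting $M$ into canonical order. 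Nonmatching terms vanish because the corresponding product of $K_G$-entries contains a zero factor.

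Second, I compare two matchings $M_1, M_2 \in \mathcal{M}(G)$ via their symmetric difference $M_1 \triangle M_2$, which is a disjoint union of even alternating cycles $C_1,\dots,C_r$. A direct calculation, reading the sign contribution cycle by cycle, gives $\epsilon(M_1)/\epsilon(M_2) = \prod_{j=1}^{r} (-1)^{c(C_j)+1}$, where $c(C_j)$ denotes the number of edges of $C_j$ oriented clockwise when $C_j$ is traversed with the orientation induced by the planar embedding.

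The core combinatorial step is to prove that for any simple cycle $C$ in a plane graph equipped with a Pfaffian orientation, $c(C) \equiv v(C) + 1 \pmod{2}$, where $v(C)$ is the number of vertices of $G$ strictly enclosed by $C$. I would argue by induction on the number of bounded faces enclosed by $C$: the base case (a single face) is exactly the clockwise-odd property defining a Pfaffian orientation. For the inductive step, I pick a bounded face $F$ whose boundary meets $C$ in a nonempty arc, form the cycle $C'$ obtained from $C$ by swapping that arc with its complement in $\partial F$, apply the induction hypothesis to $C'$ and to $\partial F$, and combine the resulting parities, using that the shared arc is counted with opposite clockwise sense in $C'$ and in $\partial F$ and that the vertices of $\partial F \setminus C$ become either boundary or interior vertices of $C'$ in a controlled way.

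Finally, since $M_1$ and $M_2$ are perfect matchings of all of $G$, every cycle $C_j$ in their symmetric difference must enclose a set of vertices that is matched by $M_1$ restricted to the interior (and also by $M_2$), so $v(C_j)$ is even, whence $c(C_j)$ is odd by the lemma and $(-1)^{c(C_j)+1}=+1$. All matchings therefore share a common sign $\epsilon$, giving $\Pf(K_G) = \epsilon\, \mathrm{Z}_G$; testing on any single matching identifies $\epsilon = +1$ (or absorbs it into a relabeling). The only genuinely subtle point is the cycle lemma of the third paragraph, where three parities — clockwise edges, strictly enclosed vertices, and the arc shared with $\partial F$ — must be balanced simultaneously; the rest of the argument is a routine unpacking of the Pfaffian expansion together with the standard observation that the symmetric difference of two perfect matchings is a disjoint union of even alternating cycles.
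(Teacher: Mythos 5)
Your outline is a correct and essentially complete proof of Kasteleyn's theorem, but note that the paper does not prove this statement at all --- it is quoted as background with a citation to Kasteleyn --- so the only meaningful comparison is with the closely related argument the paper does give. Your reduction of $\Pf(K_G)$ to a signed sum over matchings, the symmetric-difference decomposition into alternating cycles, the computation $\epsilon(M_1)/\epsilon(M_2)=\prod_j(-1)^{c(C_j)+1}$, and the observation that each $C_j$ strictly encloses an evenly-many, internally matched set of vertices are all standard and sound. Your ``core combinatorial step'' --- that a Pfaffian orientation forces $c(C)\equiv v(C)+1\pmod 2$ for every simple cycle $C$ --- is exactly the parity statement the paper establishes inside the proof of \cref{lem:IndCycDecomInDTrail}, but by a different and arguably cleaner route: summing the clockwise-odd counts $O_f$ over the faces enclosed by $C$, subtracting the interior edges (each of which is clockwise for exactly one of its two faces), and invoking Euler's formula for the enclosed plane graph. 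That computation delivers the parity identity in three lines and avoids the bookkeeping your induction on enclosed faces requires (in your inductive step you must track how the vertices of $\partial F\setminus C$ redistribute between the boundary and interior of $C'$, and handle the case where $\partial F$ meets $C$ in more than one arc); if you keep the induction, you should at least reduce to the case where $\partial F\cap C$ is a single arc. One further small caveat: with all weights positive your argument yields $\Pf(K_G)=\epsilon\, \mathrm{Z}_G$ for a common sign $\epsilon\in\{\pm 1\}$ depending on the vertex labelling, so strictly the conclusion is $\mathrm{Z}_G=|\Pf(K_G)|$ unless one fixes a labelling convention; you flag this correctly, and the paper's statement silently makes the same convention.
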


{Throughout this article, we will refer to cycles in configurations on graphs as \emph{loops}. We will always understand these loops to be directed.}
Let us now recall generalization of the dimer model known as the loop-vertex model~\cite{Ayyer2015ASM}. Let $G$ be a simple weighted graph on $n$ vertices with an orientation $\mathcal{O}$, vertex-weights $x(v)$ for $v\in V(G)$ and edge-weights $a_{v,v'}\equiv a_{v',v}$ for $(v,v') \in E(G)$. A
 \textit{loop-vertex configuration} $C$ of $G$
is a subgraph of $G$ consisting of 
\begin{itemize}
    \item directed loops of even length (with length at least four),
    \item doubled edges (which can be thought of as loops of length two), and
    \item isolated vertices,
    \end{itemize}
with the condition that each vertex of $G$ is either an isolated vertex or is covered in exactly one loop. The set of all loop-vertex configurations of $G$ will be denoted as $\mathcal{L}(G)$. \cref{fig:Example of conf} shows a graph and two loop-vertex configurations on it.

\begin{figure}[h!]
\centering
\begin{subfigure}{.32\textwidth}
  \centering
  \includegraphics[width=0.6\linewidth]{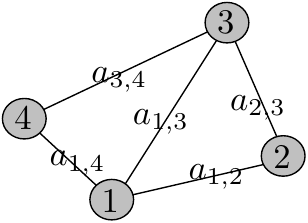}
  \caption{The graph of \cref{fig:my_label8}}
  \label{fig:sub1}
\end{subfigure}
\begin{subfigure}{.32\textwidth}
  \centering
  \includegraphics[width=0.6\linewidth]{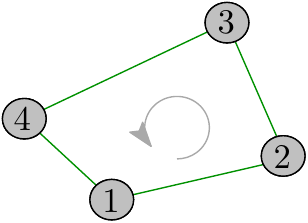}
  \caption{The directed cycle $(1234)$}
  \label{fig:sub2}
\end{subfigure}
\begin{subfigure}{.32\textwidth}
  \centering
  \includegraphics[width=0.6\linewidth]{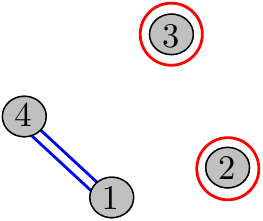}
  \caption{The doubled edge $(14)$ and \\ isolated vertices 2 and 3}
  \label{fig:sub3}
\end{subfigure}
\caption{The graph of \cref{fig:my_label8} with edge weights marked in \cref{fig:sub1}, and two loop-vertex configurations on it in \cref{fig:sub2} and \cref{fig:sub3}.}
\label{fig:Example of conf}
\end{figure}

The \textit{sign} of an edge $(v,v') \in E(G)$, is given by 
\begin{equation}
\label{eqn:SE}
   \sgn(v,v'):=\begin{cases}
    1 &  v \rightarrow v' \text{ in } \mathcal{O},
   \\ -1 &  v' \rightarrow v \text{ in } \mathcal{O}.
    \end{cases}
\end{equation}
Let $\ell=(v_0,v_1,\dots,v_{2k-1},v_{2k}=v_0)$ be a directed even loop in $G$. The 
\textit{weight }of the loop $\ell$ is given by
\begin{equation}
\label{eqn:weightofloop}
w(\ell):=-\prod_{i=0}^{2k-1} \sgn(v_i,v_{i+1})\, a_{v_i,v_{i+1}}. 
\end{equation}
Note that the weight of a doubled edge $(v_i,v_j)$ is always $+a_{v_i,v_j}^2$.

Then the \textit{loop-vertex model} on the pair $(G,\mathcal{O})$ is the collection $\mathcal{L}(G)$ with the weight of a configuration, $C=(\ell_1,\dots,\ell_j; \,v_1,\dots,v_k)$ consisting of loops $\ell_1,\dots,\ell_j$ and isolated vertices $v_1,\dots,v_k$, given by
    \begin{equation}
    \label{eqn:weightofconf}
            w(C)=\prod_{i=1}^j w(\ell_i) \,\,\prod_{i=1}^k x(v_i).
    \end{equation}
The \textit{(signed) partition function} of the loop-vertex model is defined as
\begin{equation*}
    \mathcal{Z}_{G,\mathcal{O}}:=\sum_{C \in \mathcal{L}(G)}w(C).
\end{equation*}

\begin{example}
\label{eg:PFLVM}
Let $G$ be a weighted graph on four vertices with vertex weights $x$ for all the vertices and edge weights as shown in \cref{fig:sub1}. Then the weights of the configuration shown in \cref{fig:sub2,fig:sub3} are $a_{1,2}a_{2,3}a_{3,4}a_{1,4}$ and $x^2a_{1,4}^2$. The partition function of the loop-vertex model on the graph in \cref{fig:sub1} with canonical orientation is 
\[
\mathcal{Z}_{G,\mathcal{O}} = x^4+a_{1,2}^2x^2+a_{1,3}^2x^2+a_{1,4}^2x^2+a_{2,3}^2x^2+a_{3,4}^2x^2+a_{1,2}^2a_{3,4}^2+a_{1,4}^2a_{2,3}^2+2a_{1,2}a_{2,3}a_{3,4}a_{1,4}.
\]
\end{example}

\begin{theorem}\cite[Theorem~2.5]{Ayyer2015ASM}
\label{thm:pf-lvmodel}
The partition function of the loop-vertex model on $(G,\mathcal{O})$ is 
\begin{equation*}
    \mathcal{Z}_{G,\mathcal{O}}={\det{\mathcal{K}_G}},
\end{equation*}
where $\mathcal{K}_G$ is a generalised adjacency matrix of $(G,\mathcal{O})$ defined as
\begin{equation}
\label{eqn:SAM}
    \mathcal{K}_G(v,v')=\begin{cases}
    \,x(v) &  v=v',\\
    \, a_{v,v'} &   v\rightarrow v'\, in\, \mathcal{O},\\
    -a_{v,v'} &   v'\rightarrow v\, in\, \mathcal{O},\\
    \quad 0 &   (v,v') \notin E(G).
    \end{cases}
\end{equation}
\end{theorem}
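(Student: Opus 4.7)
The plan is to expand $\det \mathcal{K}_G$ via the Leibniz formula $\det \mathcal{K}_G = \sum_{\sigma \in S_n} \sgn(\sigma) \prod_v \mathcal{K}_G(v, \sigma(v))$, decompose each permutation into disjoint cycles, and recognise the surviving terms as loop-vertex configurations after cancelling odd-cycle contributions by a sign-reversing involution. A term is nonzero only if each non-fixed vertex $v$ has $(v, \sigma(v)) \in E(G)$, so the cycles of $\sigma$ of length $\geq 2$ are forced to be directed cycles in $G$. The bookkeeping then breaks into two pieces: constructing the involution, and verifying weights match.

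For the involution, on the set of permutations containing at least one cycle of odd length $\geq 3$, I would pick the such cycle containing the vertex of smallest label and reverse it to obtain $\sigma'$. Since $\sigma'$ has the same cycle-type as $\sigma$, $\sgn(\sigma') = \sgn(\sigma)$; but because $\mathcal{K}_G(v,v') = -\mathcal{K}_G(v',v)$ off the diagonal, the product of matrix entries around the reversed cycle of length $k$ equals $(-1)^k$ times the original. For odd $k$ this factor is $-1$, so the contributions of $\sigma$ and $\sigma'$ cancel. The involution is fixed-point-free on its domain because a cycle of length $\geq 3$ on distinct vertices always differs from its reversal, and the rule for selecting the distinguished cycle is invariant under reversing it.

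The surviving permutations are those whose cycles are all fixed points or of even length. These correspond bijectively to loop-vertex configurations: fixed points to isolated vertices, 2-cycles to doubled edges, and even cycles of length $\geq 4$ to directed even loops. Cycle-by-cycle, the weights match: a fixed point contributes $\mathcal{K}_G(v,v) = x(v)$, and a cycle $(v_0, v_1, \ldots, v_{k-1})$ of even length $k \geq 2$ contributes $(-1)^{k-1} \prod_i \sgn(v_i, v_{i+1}) a_{v_i, v_{i+1}} = -\prod_i \sgn(v_i, v_{i+1}) a_{v_i, v_{i+1}}$, which is exactly $w(\ell)$ from \eqref{eqn:weightofloop} (the case $k=2$ yielding the doubled-edge weight $+a_{v,v'}^2$ via $\sgn(v,v')\sgn(v',v) = -1$). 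Taking the product of these local contributions recovers the configuration weight \eqref{eqn:weightofconf}, giving $\det \mathcal{K}_G = \mathcal{Z}_{G,\mathcal{O}}$.

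The main point of care is the involution: one must (a) fix a canonical rule for the distinguished odd cycle that is invariant under reversing that cycle, and (b) exclude 2-cycles (which equal their reversals) from the involution, which is fine since they have even length and should survive. Beyond this the argument is a direct expansion.
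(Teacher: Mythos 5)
Your proposal is correct and follows essentially the same route as the paper: the proof of \cref{thm:DetFor2} (which the paper says mirrors that of \cref{thm:pf-lvmodel}) likewise expands $\det\mathcal{K}_G$ over permutations, observes that only products of even cycles and singletons survive because $\mathcal{K}_G$ is a diagonal plus an antisymmetric matrix, and matches the surviving terms with loop-vertex configurations. Your explicit sign-reversing involution just makes precise the odd-cycle cancellation that the paper invokes as a standard fact, and your weight bookkeeping (including the $+a_{v,v'}^2$ doubled-edge case) agrees with \eqref{eqn:weightofloop} and \eqref{eqn:weightofconf}.
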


\begin{example}
The generalised adjacency matrix for the graph $G$ in \cref{eg:PFLVM} with the canonical orientation is
\[
\mathcal{K}_G=\begin{pmatrix}
x & a_{1,2} & a_{1,3}& a_{1,4}\\
- a_{1,2}&x & a_{2,3}& 0\\
- a_{1,3}& -a_{2,3}& x & a_{3,4}\\
- a_{1,4}& 0& -a_{3,4} & x
\end{pmatrix},
\]
and 
\[
\det \mathcal{K}_G = 
x^4+a_{1,2}^2x^2+a_{1,3}^2x^2+a_{1,4}^2x^2+a_{2,3}^2x^2+a_{3,4}^2x^2+a_{1,2}^2a_{3,4}^2+a_{1,4}^2a_{2,3}^2+2a_{1,2}a_{2,3}a_{3,4}a_{1,4},
\]
which is exactly $\mathcal{Z}_{G,\mathcal{O}}$ from \cref{eg:PFLVM}.
\end{example}

If $G$ is a simple vertex- and edge-weighted plane graph and $\mathcal{O}$ is a Pfaffian orientation on it, then the loop-vertex model is called the \textit{monopole-dimer model}. In that case, it has been shown~\cite[Theorem 3.3]{Ayyer2015ASM} that the 
weight of a loop $\ell=(v_0,v_1,\dots,v_{2k-1},v_{2k}=v_0)$ can be written as
    \begin{equation}
    \label{eqn:MDMEQ}
        w(\ell)=(-1)^{\text{number of vertices enclosed by $\ell$}} \,\prod_{j=0}^{2k-1}  a_{v_j,v_{j+1}}.
    \end{equation}
Then \cref{thm:pf-lvmodel} shows that the determinant of the generalised adjacency matrix of a plane graph with a Pfaffian orientation is independent of the latter.

\section{Monopole-dimer model on Cartesian products of plane graphs} 
\label{sec:MainRe}
We now extend the definition of the monopole-dimer model to Cartesian products of plane graphs.
The \textit{Cartesian product of two graphs} $G_1$ and $G_2$ is the graph denoted $G_1 \square G_2$ with vertex set $V(G_1)\times V(G_2)$ and edge set  
\begin{equation*}
\left\{((u_1,u_2),(u'_1,u'_2)) \middle\vert
\begin{aligned}
\text{either } u_1 = u'_1 \text{ and } (u_2,u'_2) \in E(G_2)  \\ 
\text{or } u_2 = u'_2 \text{ and } (u_1,u'_1) \in E(G_1) 
\end{aligned}
\right\}.
\end{equation*}
The above definition generalises to the \textit{Cartesian product of $k$ graphs} $G_1,\dots, G_k$, denoted $G_1 \square \allowbreak\cdots \square G_k$.
We will denote edges in $G_1 \square \cdots \square G_k$ of the form $((u_1,\dots,u_i,\dots,u_k), (u_1,\dots,u_i^\prime,\allowbreak \dots,u_k))$ \textit{$G_i$-edges}. Recall that a \emph{path graph} is a simple graph whose vertices can be arranged in a (non-repeating) linear sequence in such a way that two vertices are adjacent if and only if they are consecutive in the sequence. Clearly, a path graph is plane.
Let $P_n$ denote the \textit{path graph} on $n$ vertices.
It is clear from the definition that the Cartesian product of $k$ path graphs is a cuboid in $\mathbb{Z}^k$, also known as a \emph{grid graph}.
\cref{fig:LabelledP4P3} shows the Cartesian product $P_4 \square P_3$.
We will use the notation $[n]$ for the set $\{1,\dots,n\}$.

      \begin{figure}[h!]
        \centering
        \includegraphics[width=0.28\linewidth]{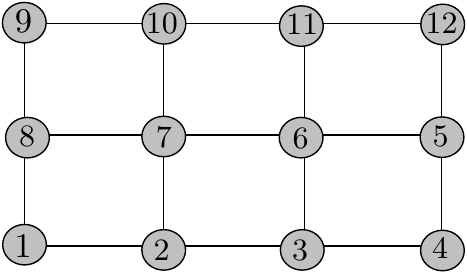}
        \caption{The Cartesian product $P_4\square P_3$ with its boustrophedon labelling; see \cref{sec:MDM3D}.}
        \label{fig:LabelledP4P3}
    \end{figure}

The \emph{degree} of a vertex is the number of edges incident to it and
an \textit{even} graph $G$ is one in which all the vertices have even degree. 
A \emph{walk} in a graph $G$ is a sequence $(v_0, e_1, v_1, \dots, v_{t-1}, \allowbreak e_{t}, v_{t})$ of alternating vertices $v_0,\dots,v_t$ and edges $e_1,\dots,e_t$ of $G$, such that $v_{i-1}$ and $v_i$ are the endpoints of $e_i$ for $1\leq i\leq t$. $v_0$ and $v_{t}$ are called the \emph{initial} and \emph{final} vertices respectively. A \emph{path} in $G$ is a walk whose vertices and edges both are distinct.
A \emph{cycle} in $G$ is a path whose initial and final vertices are identical. The \emph{size} of a path or a cycle is the number of edges in it.
{The definitions in this paragraph apply also to multigraphs.}

An \emph{edge-disjoint {multiset} of cycles} {in a multigraph} $G$ is a family of cycles $\mathcal{D} = \{d_1,\dots,d_k\}$ such that no edge belongs in more than one cycle.
In particular, a \textit{cycle decomposition} of a {multigraph} $G$ is an  edge-disjoint {multiset} of cycles $\mathcal{D}$ of $G$ such that $\underset{d \in \mathcal{D}}\cup E(d)=E(G)$.
Veblen's theorem~\cite[Theorem~2.7]{BondyGT} says that a {multigraph} admits a cycle decomposition if and only if it is even.
We say that a cycle decomposition is \textit{directed} if all of its cycles are directed.
For a plane graph $G$ and a cycle $c$ in $G$, denote \emph{the number of vertices in $V(G)$ enclosed by $c$} as $\chi(c)$.

\begin{definition}
\label{Def:DCD}
We say that the \emph{sign} of an edge-disjoint {multiset} of directed cycles $\mathcal{D}=\{d_1, \allowbreak \dots, d_k\}$ of an even plane {multigraph} $G$ is given by
\begin{equation}
\label{eqn:signDCD}
\sgn(\mathcal{D}):=\prod_{i=1}^k 
\begin{cases}
(-1)^{\chi(d_i)}
& \text{if $d_i$ has odd size and is directed clockwise,} \\
(-1)^{\chi(d_i)+1}
& \text{if either $d_i$ has even size, or has odd size} \\
& \hspace*{0.2cm} \text{and is directed anticlockwise.}
\end{cases}
\end{equation}
Note that this formula also defines the sign of a {directed cycle decomposition}. 
\end{definition}

\begin{example}
For the even plane graph $H$ shown in \cref{fig:graph}, the sign of its directed cycle decomposition $\{(1,2,3,4),(3,4,5),(5,6)\}$ shown in \cref{fig:CYCdec} is 
\begin{equation*}
    (-1)^{0+1}\times (-1)^{1}\times (-1)^{0+1}=-1.
\end{equation*}
\begin{figure}[h!]
        \centering
        \begin{subfigure}{.45\textwidth}
  \centering
  \includegraphics[width=0.8\linewidth]{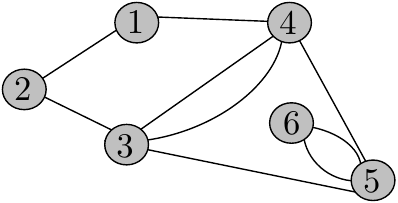}
\caption{A plane graph $H$ with parallel edges}     
  \label{fig:graph}
\end{subfigure}
\begin{subfigure}{.45\textwidth}
  \centering
  \includegraphics[width=0.8\linewidth]{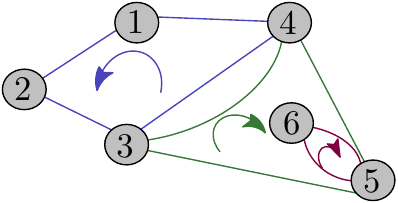}
  \caption{$H$ split as $(1234)(345)(56)$}
  \label{fig:CYCdec}
\end{subfigure}
        \caption{(A) A plane graph on 6 vertices and (B) a directed cycle decomposition of it.}
    \end{figure}

\end{example}
A \textit{trail} in a {multigraph} $G$ is a walk whose vertices can be repeated but edges are distinct. In particular, trails are allowed to self intersect at vertices.
A \textit{closed trail} is one whose initial and terminal vertices are the same. 
Therefore, a closed trail can be decomposed into a edge-disjoint {multiset} of cycles.
A \emph{directed closed trail} is a closed trail with a definite direction of traversal.

\begin{definition}
\label{def:cyc-dec-comp}
Let $T$ be a directed closed trail in a {multigraph} $G$, We say that a directed cycle decomposition $\mathcal{D}$ of $T$ is \emph{compatible} with $T$ if the direction of cycles in $\mathcal{D}$ is inherited from the direction of $T$.
\end{definition}

\begin{lemma}
\label{lem:IndCycDecomInDTrail}
Let $T$ be a closed directed trail in a plane {multigraph} $G$ with Pfaffian orientation $\mathcal{O}$. Then all cycle decompositions of $T$ compatible with it have the same sign.
\end{lemma}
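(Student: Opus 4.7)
The plan is to show that the sign of any compatible cycle decomposition of $T$ equals a single fixed quantity attached to the directed trail $T$ itself. For a directed closed walk $W=(v_0,v_1,\dots,v_n=v_0)$ in $(G,\mathcal{O})$, set
\[
s(W):=\prod_{i=0}^{n-1}\sgn(v_i,v_{i+1}),
\]
with $\sgn$ given by \cref{eqn:SE}. Clearly $s(T)$ depends only on the ordered sequence of edges traversed by $T$ in the prescribed direction, and not on any subsequent decomposition.

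The crux is the single-cycle identity $\sgn(d)=s(d)$ for every directed simple cycle $d$ in $(G,\mathcal{O})$, irrespective of size parity and rotational sense. The ingredient needed is the standard Kasteleyn parity identity in the plane multigraph setting: for any simple cycle $c$, the number $\alpha(c)$ of edges of $c$ that are oriented clockwise by $\mathcal{O}$ satisfies $\alpha(c)\equiv\chi(c)+1\pmod{2}$. This is obtained by summing the clockwise-odd property over the bounded faces enclosed by $c$ and invoking Euler's formula to eliminate the internal edge count; the even-size instance of this parity statement is already implicit in the passage from \cref{eqn:weightofloop} to \cref{eqn:MDMEQ}. A short four-way case analysis on whether $d$ is directed clockwise or anticlockwise and on the parity of $|d|$ then verifies $\sgn(d)=s(d)$: writing $s(d)=(-1)^{|d|-\alpha(d)}$ when $d$ is clockwise and $s(d)=(-1)^{\alpha(d)}$ when $d$ is anticlockwise, and substituting the parity identity, one recovers $(-1)^{\chi(d)}$ when $d$ has odd size and is directed clockwise, and $(-1)^{\chi(d)+1}$ in each of the other three cases, exactly matching \cref{eqn:signDCD}.

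With the single-cycle identity in hand, the lemma is immediate. For any cycle decomposition $\mathcal{D}=\{d_1,\dots,d_k\}$ compatible with $T$,
\[
\sgn(\mathcal{D})=\prod_{j=1}^{k}\sgn(d_j)=\prod_{j=1}^{k}s(d_j)=s(T),
\]
since compatibility forces each $d_j$ to inherit its direction from $T$, and the edges of the cycles $d_j$ edge-partition those of $T$; the right-hand side is manifestly independent of $\mathcal{D}$. The step that most deserves care is the extension of the Kasteleyn parity identity to arbitrary simple cycles in the multigraph setting; one needs to verify that interior isolated vertices or disconnected interior components do not spoil the Euler-formula count used in the derivation, but for the plane multigraphs appearing in this paper this is routine.
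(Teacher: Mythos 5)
Your proof is correct and follows essentially the same route as the paper's: both identify the decomposition-independent invariant $(-1)^{\#\{\text{edges of }T\text{ traversed against }\mathcal{O}\}}=s(T)$, reduce to the single-cycle identity via the edge-partition of $T$ into the compatible cycles, and establish that identity by summing the clockwise-odd property over enclosed faces and applying Euler's formula — your multiplicative $s(\cdot)$ bookkeeping is just the paper's additive count of reversed edges in exponential form. The connectivity caveat you flag at the end is present (and equally unaddressed) in the paper's own Euler-characteristic step, so you have not introduced any new gap.
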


\begin{proof}
The idea of this proof is similar to that of~\cite[Theorem~3.3]{Ayyer2015ASM}.
Let $\mathcal{D}=\{d_1,\dots,d_k\}$ be a directed cycle decomposition of $T$ compatible with it. Then the number of edges of $T$ oriented in the opposite direction to $T$ is given by
\begin{align}
\label{eqn:edRevDj}
\sum_{j=1}^k\bigg(\text{number of edges oriented in opposite direction of } d_j \bigg).
\end{align}
For $j\in[k]$, let $E_j$ and $F_j$ be the number of edges and faces enclosed by $d_j$ respectively. Since $\mathcal{O}$ is Pfaffian, the number of clockwise oriented edges on the boundary of any bounded face $f$ is odd (say $O_f$). Thus the number of clockwise oriented edges of $d_j$ is
\[
\sum_{\substack{f \text{ is a face in $G$} \\ \text{enclosed by $d_j$}}} O_f \, -E_j,
\]
because each edge enclosed by $d_j$ contributes to exactly two faces, one clockwise and one anticlockwise.  
Since $O_f$ is odd for any bounded face $f$, the above quantity has the same parity as
$F_j-E_j$.
Now, using the Euler characteristic on the plane graph enclosed by $d_j$, 
the number of clockwise oriented edges of $d_j$ and the number of vertices enclosed by $d_j$, which we called $\chi(d_j)$, have opposite parity.
 Thus the quantity in \eqref{eqn:edRevDj} is equal to $\sgn(\mathcal{D})$ given in \eqref{eqn:signDCD}.
\end{proof}

\begin{definition}
\label{def:ortd Cart}
The \emph{oriented Cartesian product} of naturally labeled oriented graphs $(G_1,\mathcal{O}_1), \allowbreak \dots, (G_k,\mathcal{O}_k)$ 
is the graph $G_1\square \cdots \square G_k$ with orientation $\mathcal{O}$ given 
as follows. For each $i \in [k]$, if $u_i\rightarrow u_i^\prime $ in $\mathcal{O}_i$, then 
$\mathcal{O}$ gives orientation
$(u_1,\dots,u_i,\dots,u_k) \rightarrow (u_1,\dots,u_i^\prime,\dots,u_k)$ if $u_{i+1}+u_{i+2}+\dots+u_k+(k-i)\equiv 0 \pmod 2$ and  $(u_1,\dots,u_i^\prime,\dots,u_k) \allowbreak \rightarrow (u_1,\dots,u_i,\dots,u_k)$ otherwise.
\end{definition}

If we assign the canonical orientation to the graph in \cref{fig:LabelledP4P3}, it can be thought of as an oriented Cartesian product of paths $P_4$ and $P_3$ which are labeled consecutively from one leaf to another.

\begin{definition}
\label{def:iproj}
The \emph{$i$-projection} of a subgraph $S$ of the
Cartesian product $G_1\square \cdots \square G_k$ is the {multigraph} obtained by contracting all but $G_i$-edges of $S$ and is denoted $\tilde{S}_{i}$.
\end{definition}

Let $G_1,\dots,G_k$ be $k$ plane simple naturally labeled graphs and $P$ 
be their Cartesian product.
Let $\ell=(w_0,w_1,\dots,w_{2s-1}, w_{2s}=w_0)$ be a directed even loop in $P$, and $\mathcal{D}_{i} $ be a cycle decomposition compatible with the $i$-projection $\tilde{\ell}_{i}$. 
For $i \in [k]$, let $\hat{G}^{(i)}$ be the graph $G_1 \square \cdots \square G_{i-1} \allowbreak \square G_{i+1} \square \cdots \square G_k$.
For $\hat{v} = (v_1,\dots,v_{i-1},v_{i+1},\dots,v_k) \in V(\hat{G}^{(i)})$, let $G_{i}(\hat{v})$ be the copy of $G_i$ in $P$ corresponding to $\hat{v}$
and let $e_i(\hat{v})$ be the number of edges lying both in $\ell$ and $G_{i}(\hat{v})$.
Now let 
\[
e_i = \sum_{\substack{\hat{v} \in V(\hat{G}^{(i)}) \\
v_{i+1}+\dots+v_k+(k-i)\equiv 1 \pmod 2} } e_i(\hat{v}).
\]
Then the \textit{sign} of $\ell$ is defined by
\begin{equation}
\label{eqn:NewIntforKcatesian}
    \sgn(\ell) := -\prod_{i=1}^{k-1}(-1)^{e_i} \,\,\prod_{j=1}^k \sgn (\mathcal{D}_{j}). 
\end{equation}
Note that the sign of $\ell$ is well-defined by \cref{lem:IndCycDecomInDTrail}. Now suppose that $P$ has been given vertex weights $x(w)$ for $w\in V(P)$ and edge weights $a_e$ for $e\in E(P)$.
Then the \textit{weight of the loop $\ell$} is defined as
\begin{equation}
\label{eqn:MMDMweight}
    w(\ell):=  \sgn(\ell) \prod_{e\in E(\ell)}a_e.
\end{equation}

Note that the orientation of a graph $G$ is not relevant for the definition of the loop-vertex configuration (defined in \cref{sec:MMDM}) on $G$. We will call a loop-vertex configuration an \emph{(extended) monopole-dimer configuration} when the underlying graph $G$ is a Cartesian product of simple plane graphs.

\begin{definition}
\label{Def:MDMonCarProd}
The \emph{(extended) monopole-dimer model} on the weighted Cartesian product $P = G_1\square \cdots \square G_k$
is the collection $\mathcal{L}$ of monopole-dimer configurations on $P$ where the weight of each configuration $C=(\ell_1,\dots,\ell_m; \,v_1,\dots,v_n)$ given by
\[
    w(C)=\prod_{i=1}^m w(\ell_i)\,\, \prod_{i=1}^n x(v_i).
\]
The \emph{(signed) partition function} of the monopole-dimer model on the Cartesian product $P$ is 
\[
    \mathcal{Z}_{P}:=\sum_{C\in \mathcal{L}}w(C).
\]
\end{definition}

From the above definition, it is clear that $\mathcal{Z}_{P}$ is independent of the orientations on $G_1,\dots,G_k$.
The following result is a generalisation of \cref{thm:pf-lvmodel} when $G$ is plane and $\mathcal{O}$ is Pfaffian.
Recall that $\mathcal{K}_P$ is the generalised adjacency matrix defined in \eqref{eqn:SAM} for $(P,\mathcal{O})$.

\begin{theorem} 
\label{thm:DetFor2} 
Let $G_1,\dots,G_k$ be $k$ simple plane naturally labeled graphs with Pfaffian orientations $\mathcal{O}_1,\dots,\mathcal{O}_k$ respectively. The (signed) partition function of the monopole-dimer model for the weighted oriented Cartesian product $(P,\mathcal{O})$ of $G_1,\dots,G_k$ is given by
\begin{equation}
    \mathcal{Z}_{P}=\det{\mathcal{K}_P}.
\end{equation}
\end{theorem}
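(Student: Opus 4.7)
The plan is to reduce \cref{thm:DetFor2} to \cref{thm:pf-lvmodel} by verifying that, for the oriented Cartesian product $(P,\mathcal{O})$, the weight of every monopole-dimer configuration in the sense of \cref{Def:MDMonCarProd} agrees with its weight as a loop-vertex configuration on $(P,\mathcal{O})$. Since vertex weights enter both definitions identically, and since the collection $\mathcal{L}$ of configurations is the same in both models, the task reduces to proving that for each directed even loop $\ell=(w_0,w_1,\dots,w_{2s}=w_0)$ in $P$,
\begin{equation*}
-\prod_{j=0}^{2s-1}\sgn(w_j,w_{j+1})\;=\;\sgn(\ell),
\end{equation*}
where the left-hand side uses the sign in $\mathcal{O}$ defined by \eqref{eqn:SE} and the right-hand side is defined by \eqref{eqn:NewIntforKcatesian}. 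Once this sign identity is established, the loop weights in \eqref{eqn:weightofloop} and \eqref{eqn:MMDMweight} coincide, so $\mathcal{Z}_P=\mathcal{Z}_{P,\mathcal{O}}=\det\mathcal{K}_P$ by \cref{thm:pf-lvmodel}.

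To prove the sign identity, I would decompose $\sgn(w_j,w_{j+1})$ into two factors according to \cref{def:ortd Cart}. Every edge of $\ell$ is a $G_i$-edge for a unique $i$, and if it lies in the copy $G_i(\hat v)$ for $\hat v=(v_1,\dots,v_{i-1},v_{i+1},\dots,v_k)$, then the orientation $\mathcal{O}$ agrees with the underlying $\mathcal{O}_i$-orientation precisely when $v_{i+1}+\cdots+v_k+(k-i)$ is even. Writing $\sgn_{\mathcal{O}_i}(\cdot)$ for the sign under $\mathcal{O}_i$ after projecting the edge to $G_i$, this yields
\begin{equation*}
\sgn(w_j,w_{j+1})\;=\;(-1)^{v_{i+1}+\cdots+v_k+(k-i)}\;\sgn_{\mathcal{O}_i}(w_j,w_{j+1}).
\end{equation*}
Taking the product over all edges of $\ell$, the parity prefactors collect (by definition of $e_i$, and noting that the factor for $i=k$ is trivial) into $\prod_{i=1}^{k-1}(-1)^{e_i}$, so
\begin{equation*}
\prod_{j=0}^{2s-1}\sgn(w_j,w_{j+1})\;=\;\prod_{i=1}^{k-1}(-1)^{e_i}\;\prod_{i=1}^{k}\Bigl(\prod_{e\in G_i\text{-edges of }\ell}\sgn_{\mathcal{O}_i}(e)\Bigr).
\end{equation*}

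The main step is to identify the inner product with $\sgn(\mathcal{D}_i)$. The $G_i$-edges of $\ell$, taken in the order and orientation inherited from $\ell$, form a closed directed trail in the plane multigraph $\tilde{\ell}_i$ viewed inside $G_i$ with its Pfaffian orientation $\mathcal{O}_i$. The computation in the proof of \cref{lem:IndCycDecomInDTrail}, together with \eqref{eqn:signDCD}, shows that for any compatible directed cycle decomposition $\mathcal{D}_i$,
\begin{equation*}
\sgn(\mathcal{D}_i)\;=\;(-1)^{\#\{\text{$G_i$-edges of $\ell$ traversed opposite to $\mathcal{O}_i$}\}}\;=\;\prod_{e\in G_i\text{-edges of }\ell}\sgn_{\mathcal{O}_i}(e).
\end{equation*}
Substituting this in gives $\prod_j\sgn(w_j,w_{j+1})=\prod_{i=1}^{k-1}(-1)^{e_i}\prod_{i=1}^{k}\sgn(\mathcal{D}_i)$, which after multiplying by $-1$ is exactly $\sgn(\ell)$ in \eqref{eqn:NewIntforKcatesian}.

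The main obstacle is the careful bookkeeping in the second paragraph: one must match the factor $(-1)^{v_{i+1}+\cdots+v_k+(k-i)}$ produced by \cref{def:ortd Cart} with the parity summation defining $e_i$, confirming that only $i=1,\dots,k-1$ contribute and that no edge is double-counted across different projections. The reinterpretation of $\sgn(\mathcal{D}_i)$ as a parity count of reversed edges, which is extracted from the proof of \cref{lem:IndCycDecomInDTrail} rather than from its statement, is the conceptual key that makes the two sign conventions match.
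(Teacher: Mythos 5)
Your proposal is correct and follows essentially the same route as the paper: the substance in both cases is the sign identity $-\prod_j\sgn(w_j,w_{j+1})=\sgn(\ell)$, proved by splitting each edge's sign into the parity prefactor from \cref{def:ortd Cart} (collected into $\prod_{i=1}^{k-1}(-1)^{e_i}$) and the underlying $\mathcal{O}_i$-sign, then identifying the latter product with $\sgn(\mathcal{D}_i)$ via the computation inside \cref{lem:IndCycDecomInDTrail}. The only cosmetic difference is that you invoke \cref{thm:pf-lvmodel} as a black box, whereas the paper re-runs the determinant expansion to match permutations with configurations before carrying out the identical sign bookkeeping.
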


The proof strategy is similar to that of \cref{thm:pf-lvmodel}.
\begin{proof}
Since $\mathcal{K}_P$ is the sum of a diagonal matrix and an antisymmetric matrix, the only terms contributing to $\det{\mathcal{K}_P}$ correspond to permutations which are product of even cycles and singletons, and hence are in bijective correspondence with monopole-dimer configurations on $P$. Thus, we only need to show that sign coming from an even cyclic permutation $(v_0,v_1,\dots,v_{2s-1}, \allowbreak v_{2s}=v_0)$ coincides with the sign of the corresponding directed loop
$\ell=(v_0,v_1,\dots,v_{2s-1},v_{2s}=v_0)$.
That is, we have to prove that
\[
(-1)^{\#\text{edges pointing in opposite direction of }\ell \text{ (under }\mathcal{O})+1}=\sgn\, \ell.
\]

Let $r$ be the number of edges pointing in opposite direction of $\ell$ under $\mathcal{O}$. Note that the contribution to $r$ comes from $k$ type of edges, $G_1$-edges, $G_2$-edges,$\dots, G_k$-edges in $\ell$. 
Since $\ell$ is a directed cycle, the $i$-th projection $\Tilde{\ell_{i}}$, of $\ell$ is a directed trail in $\Tilde{P_{i}}$ (which is just $G_i$ with multiple edges). Let $\mathcal{D}_{i}=\{d_{i,1},d_{i,2},\dots,d_{i,m_i}\}$ be a directed cycle decomposition compatible with $\Tilde{\ell_{i}}$ according to \cref{def:cyc-dec-comp}. Denote the number of edges in $d_{i,j}$, for $j\in[m_i]$, oriented under $\mathcal{O}$ in the direction opposite to it as $\varepsilon^{i,j}$.

Recall the notation $\chi(c)$ and $e_i$ from earlier in this section.
For $i \in [k-1]$, the edges contributing to $e_i$ have been reversed while defining $\mathcal{O}$ 
and thus the contribution of $G_i$-edges to $r$ is
\begin{multline*}
\sum_{j=1}^{m_i} \varepsilon^{i,j}
\equiv
    e_i \\
    +\sum_{j=1}^{m_i}
    \big( \text{number of edges in }d_{i,j} \text{ oriented under $\mathcal{O}_i$ in the direction opposite to it}\big) \pmod{2}.
\end{multline*}
By the proof of~\cref{lem:IndCycDecomInDTrail}, it follows that the number of clockwise oriented edges of $d_{i,j}$ under $\mathcal{O}_i$ and $\chi(d_{i,j})$ have opposite parity. Therefore,
\[
\sum_{j=1}^{m_i} \varepsilon^{i,j}
\equiv
    e_i+\sum_{j=1
    }^{m_i}\,\begin{cases}
     \chi(d_{i,j}) & 
    \text{if $d_{i,j}$ has odd size}\\ 
    & \text{and is directed clockwise,} \\
    \chi(d_{i,j})+1 & 
	\text{if either $d_{i,j}$ has even size, or has odd size}\\ 
	& \text{and is directed anticlockwise.} \\
    \end{cases}
    \pmod{2}.
\]
Now, by \cref{Def:DCD}, $G_i$-edges of $\ell$ contribute $(-1)^{e_i}\, \sgn\,\, \mathcal{D}_{i}$ to $(-1)^r$.
Similarly, the contribution of $G_k$-edges to $(-1)^r$ is $\sgn\,\, \mathcal{D}_{k}$ as we have not altered the directions coming from $\mathcal{O}_k$ in any copy of $G_k$. Thus,
\begin{align*}
    (-1)^{r}=\prod_{i=1}^{k-1}(-1)^{e_i}\sgn\,\, \mathcal{D}_{i}\times \sgn\,\, \mathcal{D}_{k} =\prod_{i=1}^{k-1}(-1)^{e_i} \,\,\prod_{s=1}^k \sgn\, \mathcal{D}_{s},
\end{align*}
resulting in $(-1)^{r+1}=  \sgn\, \ell$.
Hence, we get the (signed) partition function of the (extended) monopole-dimer model for the oriented cartesian product as a determinant.
\end{proof}

Recall that the the partition function of the monopole-dimer model is defined for (unoriented) Cartesian product of graphs in \cref{Def:MDMonCarProd}. The next result can thus be seen as an
analogue of the observation made at the end of \cref{sec:MMDM}.

\begin{corollary}
\label{cor:kProdisOrinInd}
Let $G_1,\dots,G_k$ be $k$ simple plane naturally labeled graphs with Pfaffian orientations $\mathcal{O}_1,\dots,\mathcal{O}_k$ respectively. Then the determinant  of the generalised adjacency matrix $\mathcal{K}_P$ of the oriented cartesian product, $(P,\mathcal{O})$, of $(G_1,\mathcal{O}_1),\dots,(G_k,\mathcal{O}_k)$ is independent of the Pfaffian orientations $\mathcal{O}_1,\dots,\mathcal{O}_k$.
\end{corollary}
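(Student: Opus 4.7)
The plan is almost immediate from what has already been established. By \cref{thm:DetFor2}, we have the identity
\[
\det \mathcal{K}_P = \mathcal{Z}_P,
\]
valid for any choice of Pfaffian orientations $\mathcal{O}_1,\dots,\mathcal{O}_k$ (which determine $\mathcal{O}$ via \cref{def:ortd Cart}, and hence determine $\mathcal{K}_P$). So to conclude it suffices to observe that the right-hand side does not depend on these orientations.

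First I would point to \cref{Def:MDMonCarProd}: the partition function $\mathcal{Z}_P$ is a sum, over monopole-dimer configurations $C \in \mathcal{L}$, of weights built from the loop weights \eqref{eqn:MMDMweight} and the vertex weights $x(v)$. The set $\mathcal{L}$ of configurations depends only on the underlying (unoriented) Cartesian product $P$. For each loop $\ell$, the weight involves only the edge weights $a_e$ and the sign defined in \eqref{eqn:NewIntforKcatesian}. Inspecting the latter formula, the factors $(-1)^{e_i}$ depend only on the combinatorics of $\ell$ inside $P$ (the counts $e_i(\hat v)$ of $G_i$-edges in each copy) and the parity convention in \cref{def:ortd Cart}, while each $\sgn(\mathcal{D}_j)$ is defined in \eqref{eqn:signDCD} purely in terms of the plane embedding of the $j$-projection, the number of enclosed vertices, and clockwise/anticlockwise directedness. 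None of these ingredients refers to the orientations $\mathcal{O}_1,\dots,\mathcal{O}_k$; well-definedness of the loop sign is guaranteed by \cref{lem:IndCycDecomInDTrail}, already proven.

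Hence $\mathcal{Z}_P$ is a function of the weighted plane graphs $G_1,\dots,G_k$ alone, and combining this with the equality $\det \mathcal{K}_P = \mathcal{Z}_P$ from \cref{thm:DetFor2} yields the corollary. The only potential subtlety — and the step that merits being spelled out explicitly — is verifying that every ingredient of \eqref{eqn:NewIntforKcatesian} is orientation-free; but since \cref{thm:DetFor2} has been established for an arbitrary choice of Pfaffian orientations, this amounts to simply recording the observation that two different Pfaffian choices produce the same $\mathcal{Z}_P$, and therefore the same determinant.
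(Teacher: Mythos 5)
Your argument is correct and is exactly the paper's intended proof: the paper notes immediately after \cref{Def:MDMonCarProd} that $\mathcal{Z}_P$ is defined on the unoriented Cartesian product and hence is independent of the orientations, and the corollary then follows directly from the identity $\det\mathcal{K}_P=\mathcal{Z}_P$ of \cref{thm:DetFor2}. Your additional check that every ingredient of \eqref{eqn:NewIntforKcatesian} is orientation-free is a sensible elaboration of the same observation.
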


\section{Monopole-dimer model on Cartesian products of plane bipartite graphs} 
\label{sec:MDMbipartite case}
Recall that a \textit{bipartite graph} is a graph $G$ whose vertex set can be partitioned into two subsets $X$ and $Y$ such that each edge of $G$ has one end in $X$ and other end in $Y$. Bipartite graphs only have cycles of even length. Since the direction of even cycles does not affect the sign in \cref{Def:DCD}, we can write the \emph{sign} of an edge-disjoint {multiset} of cycles $\mathcal{D}=\{d_1, \dots, d_k\}$ in an even bipartite plane {multigraph} $G$ as
$\sgn(\mathcal{D}):=\prod_{i=1}^k (-1)^{\chi(d_i)+1}$.

Note that the above formula also applies to a cycle decomposition of an even bipartite plane {multigraph}.
We will show that the sign of a cycle decomposition remains the same for all cycle decompositions of a plane bipartite even {multigraph}. For that we  first define some moves on two cycles in a decomposition.

Let $G$ be an even plane bipartite {multigraph} and $(c_1, \dots, c_{t})$ be an edge-disjoint {multiset} of cycles in $G$. 
Then we define the following moves transforming one {multiset} of cycles into another.
For each move, we also calculate the change in the sign of this {multiset} of cycles.
\begin{enumerate}

    \begin{figure}[h!]
        \centering
        \includegraphics[scale=0.8]{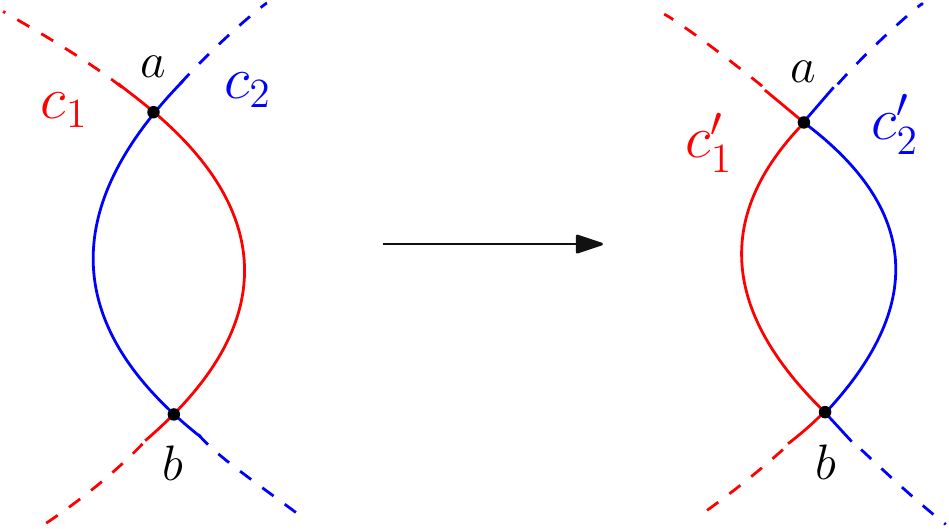}
        \caption{The $M_1$-move from \cref{it:M0}. Here, the dotted blue and red lines indicate that they are allowed to intersect each other.}
        \label{fig:M1 move}
    \end{figure}

    \item 
    \label{it:M0}
    The \emph{$M_1$-move} changes cycles $c_1$, $c_2$ into cycles $c_1^\prime$, $c_2^\prime$ as shown in \cref{fig:M1 move}.
Let $v$ (resp. $v'$) be the number of internal vertices lying on the blue (resp. red) solid path from $a$ to $b$ along $c_2$ (resp. $c_1$) in the left side of \cref{fig:M1 move}.
Let $u$ be the number of vertices enclosed by the cycle formed by these two paths. Then
\begin{align*}
\chi(c_1^\prime)+\chi(c_2^\prime) &=\chi(c_1)-u-v+\chi(c_2)-u-v^\prime\\
   &\equiv \chi(c_1)+\chi(c_2) \pmod 2\\
   &\equiv(\chi(c_1)+1)+(\chi(c_2)+1) \pmod 2,
    \end{align*}
where we have used the fact that $v+v^\prime\equiv 0$ (as $G$ is bipartite) in the second line.
Thus, performing the $M_1$-move in this {multiset} of cycles preserves its sign.

\begin{figure}[h!]
\centering
    \includegraphics[scale=0.8]{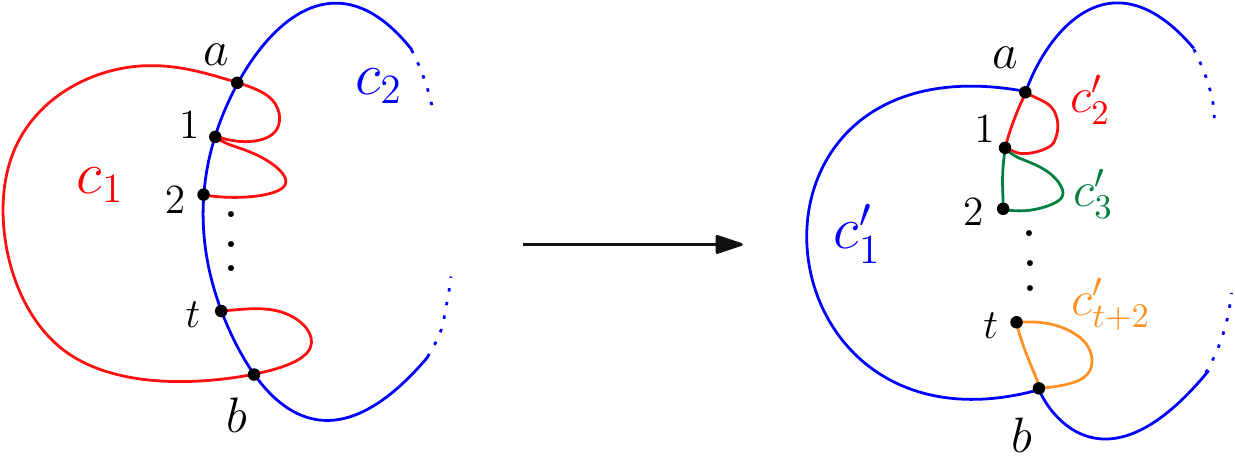}
\caption{The $M_2$-move from \cref{it:Mt}. The dotted arc of $c_2$ on the right indicates that it can intersect with the part of $c_1$ from $a$ to $b$ on the right side.}
\label{fig:M2 move}
\end{figure}

    \item 
    \label{it:Mt}
   Let $c_1,c_2$ intersect as shown on the left side of \cref{fig:M2 move}. Without loss of generality, the left arc of $c_1$ strictly between $a$ and $b$ does not intersect $c_2$, and the right arc of $c_1$ strictly between $a$ and $b$ intersects the left arc of $c_2$ strictly between $a$ and $b$ only at the $t$ points shown.
    The \emph{$M_2$-move} changes cycles $c_1,c_2$ into $(t+2)$ cycles $c_1^\prime,c_2^\prime,\dots,c_{t+2}^\prime$ as shown in the right side of \cref{fig:M2 move}.
Then, by considering internal vertices in all regions, the sign of the latter {multiset} is given by
\begin{align*}
\sum_{i=1}^{t+2}(\chi(c_i^\prime)+1) &=\chi(c_1^\prime)+\sum_{i=2}^{t+2}\chi(c_i^\prime)+(t+2)\\
&=\bigg(\chi(c_1)+\chi(c_2)+t-\sum_{i=2}^{t+2}\chi(c_i^\prime)\bigg)+\sum_{i=2}^{t+2}\chi(c_i^\prime)+t+2 \\
      &\equiv(\chi(c_1)+1)+(\chi(c_2)+1) \pmod 2.
    \end{align*}
Thus, the $M_2$-move also preserves the sign of the {multiset} of cycles.

    \begin{figure}[h!]
        \centering
        \includegraphics[scale=0.8]{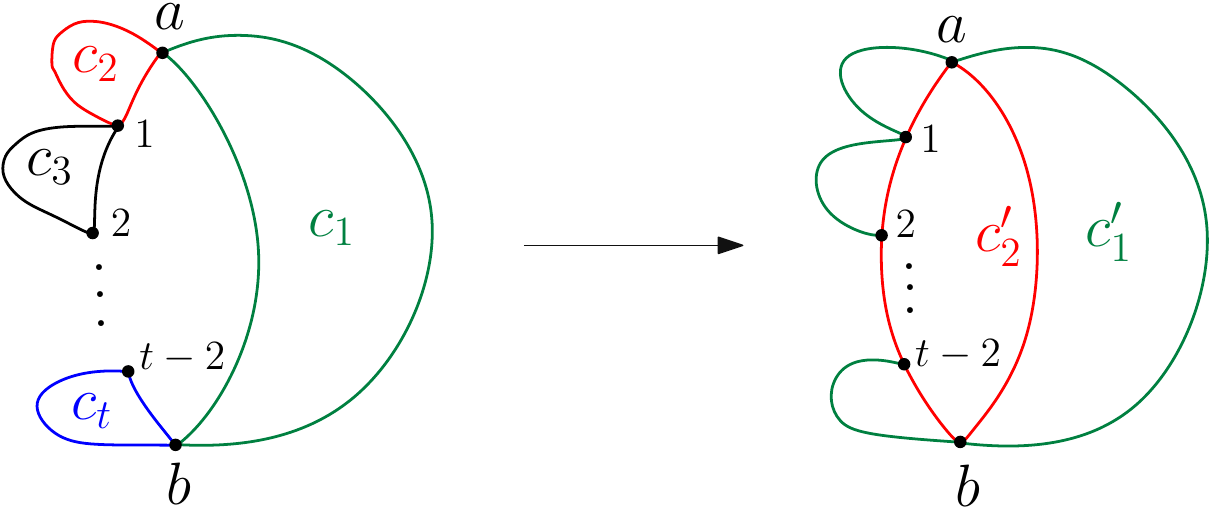}
        \caption{The $M_3$-move from \cref{it:Mt'}.}
        \label{fig:M3 move}
    \end{figure}

    \item 
    \label{it:Mt'}
    Fot $t \geq 2$, suppose $c_1,\dots,c_{t}$ are cycles in the {multigraph} $G$ such that they form a closed chain as shown on the left side in~\cref{fig:M3 move}. Note that these cycles do not intersect at points other than those shown in the figure. 
    The \emph{$M_3$-move} converts these $t$ cycles into two cycles namely $c_1^\prime,c_2^\prime$ as shown in the right side of~\cref{fig:M3 move}.
 Then the sign of the latter {multiset} of cycles is
     \begin{align*}
\sum_{i=1}^{2} \left( \chi(c_i^\prime)+1 \right) &\equiv \chi(c_1^\prime)-\chi(c_2^\prime) \pmod 2,\\
  &\equiv \sum_{i=1}^{t}\chi(c_i)+(\text{size of }c_2^\prime)-t \pmod 2 ,
  \end{align*}
because $c_1'$ encloses all the vertices except  $a,b,1,\dots,(t-2)$ lying on $c_2'$. Since $G$ is bipartite, it can only have cycles of even size. Thus
      \begin{align*}
\sum_{i=1}^{2} \left( \chi(c_i^\prime)+1 \right)  &\equiv \sum_{i=1}^{t}\chi(c_i)+t \pmod 2 \\
  &\equiv \sum_{i=1}^{t}(\chi(c_i)+1) \pmod 2 .
    \end{align*}   
Again, the sign of the {multiset} is preserved under the $M_3$-move.

\end{enumerate}

We have thus shown that performing any sequence of moves of the form \cref{it:M0,it:Mt,it:Mt'} will not affect the sign of a {multiset} of cycles.

Recall that a \emph{bridge} or \emph{cut edge} in a {multigraph} $G$ is an edge whose deletion increases the number of connected components in $G$.
Let $G$ be a connected even plane {multigraph}. Then $G$ cannot have a bridge~\cite[Exercise 3.2.3]{BondyGT} and hence the boundary of the outer face (being a closed trail) can be decomposed into cycles $c_1,\dots,c_k$ such that $|V(c_{i})\cap V(c_{j})| \leq 1$ for all $i,j \in [k]$.

\begin{definition}
 Let $G$ be a connected even plane {multigraph} and $C$ be the boundary of the outer face consisting of cycles $c_1,\dots,c_k$. Then an \emph{outer cycle decomposition} of $G$ is a cycle decomposition of $G$ containing $c_1,\dots,c_k$ and the latter will be called \emph{boundary cycles}.
\end{definition}

The next result is a crucial step towards the main result of this section.

\begin{lemma}
 \label{lem:BdryCyclDecLemma}
Let $G$ be a connected bipartite even plane {multigraph}. Then for any cycle decomposition $\mathcal{D}$ of $G$, there exists an outer cycle decomposition $\mathcal{D}^\prime$ of $G$ with same sign. 
 \end{lemma}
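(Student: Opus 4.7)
My plan is to transform $\mathcal{D}$ into an outer cycle decomposition through a finite sequence of $M_1$-, $M_2$-, and $M_3$-moves; since each such move preserves the sign by the calculations immediately preceding the lemma, the resulting outer cycle decomposition $\mathcal{D}'$ will automatically satisfy $\sgn(\mathcal{D}') = \sgn(\mathcal{D})$. I would organize this by an outer induction on the number $N(\mathcal{D})$ of boundary cycles $c_i$ of the outer face that are not present in $\mathcal{D}$. When $N(\mathcal{D}) = 0$, $\mathcal{D}$ is already an outer cycle decomposition and there is nothing to prove. When $N(\mathcal{D}) > 0$, fix a boundary cycle $c$ with $c \notin \mathcal{D}$ and show how to move to a new decomposition that contains $c$ while preserving every boundary cycle already in $\mathcal{D}$, thereby decreasing $N$.

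To extract $c$, I would run an inner induction on the quantity $r(\mathcal{D}, c)$, defined as the number of maximal arcs of consecutive edges of $c$ each lying in a single cycle of $\mathcal{D}$. The key observation is that $r(\mathcal{D}, c) = 1$ forces $c \in \mathcal{D}$: the unique cycle $d \in \mathcal{D}$ containing all edges of $c$ is a simple cycle passing through each vertex of $c$ exactly once, and the edges of $c$ already close into a simple cycle on these vertices, so $d = c$. When $r(\mathcal{D}, c) > 1$, pick a vertex $v$ of $c$ separating two arcs and let $e, e'$ be the two edges of $c$ at $v$, lying respectively in distinct cycles $d_1, d_2$ of $\mathcal{D}$. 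I would then apply the appropriate sign-preserving move — $M_1$ or $M_2$ when $d_1$ and $d_2$ share an additional vertex or intersect transversely near $v$, or $M_3$ when $d_1$, $d_2$, and possibly further cycles of $\mathcal{D}$ form a closed chain through $v$ — to reassign the pairings so that $e$ and $e'$ end up in a common cycle. This strictly reduces $r(\mathcal{D}, c)$, so after finitely many moves $r = 1$ and $c$ is extracted. Since each move is local to cycles of $\mathcal{D}$ through $v$ that use at least one edge off $c$, the boundary cycles already in $\mathcal{D}$ that pass through $v$ do so with their own separate pairing and are unaffected, so the outer induction on $N$ goes through.

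The main obstacle is the case analysis for the inner step, because a naive swap of just the pairings $\{e, f\}$ and $\{e', g\}$ at $v$ (where $f, g$ are the mates of $e$ and $e'$ in $d_1$ and $d_2$) is not literally one of the three listed moves and must either match an $M_i$-template directly or be realized as a composition of them. Planarity of $G$ restricts the configuration of $d_1, d_2$ around $v$ to a manageable list of templates — sharing two vertices with or without additional transverse intersections, or linking through a closed chain of cycles from $\mathcal{D}$ — each of which is covered by one of $M_1$, $M_2$, $M_3$. The bipartiteness of $G$ is crucial throughout: it keeps all cycle lengths even, so the parity computations behind the sign-preservation of each move remain valid after every reassignment. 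Spelling out the exhaustive planar case distinction and verifying strict decrease of $r$ in each case is the technical heart of the proof.
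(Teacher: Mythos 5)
Your proposal takes essentially the same route as the paper: repeatedly apply the sign-preserving $M_1$-, $M_2$-, and $M_3$-moves to absorb each boundary cycle into the decomposition, with termination guaranteed by a monovariant tracking how much of the boundary cycle $c$ has been consolidated into a single cycle (your arc count $r(\mathcal{D},c)$ versus the paper's count of edges of $c$ covered by a distinguished cycle $\ell_1$). The one step you explicitly defer --- the exhaustive case analysis showing that a reducing move is always available --- is precisely where the paper's proof does its work: it splits into (i) the case where two cycles sharing an edge with $c$ intersect each other in at least two points, handled by a sequence of $M_1$-moves followed by an $M_2$-move, and (ii) the complementary case where all cycles meeting $c$ in an edge pairwise intersect at most once, which forces them into a closed chain around $c$ that an $M_3$-move collapses; in either case the coverage of $c$ strictly increases. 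Your instinct that a bare repairing of the two $c$-edges at a single vertex $v$ is not itself one of the three moves is correct (indeed, if the two cycles meet only at $v$, a local swap would produce a closed trail revisiting $v$ rather than a cycle, which is exactly why the global chain structure and the $M_3$-move are needed), so to complete the argument you would have to replace your local-at-$v$ template matching with the paper's global dichotomy or an equivalent one.
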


 \begin{proof}
 Suppose $c_1,\dots,c_k$ are the boundary cycles.
 If $k> 1$, we can work separately with each subgraph of $G$ lying inside the cycle $c_j$ for each $j\in[k]$. Thus, without loss of generality, we can assume that there is just a single cycle $c$ (say). If $\mathcal{D}$ contains $c$, there is nothing to prove. So assume $\mathcal{D}$ does not contain $c$. Then there exist at least two cycles
 in $\mathcal{D}$ which will intersect $c$ in some edge(s). 
 There are two possibilities now depending on whether the cycles above intersect each other more than once or not.
 
 \begin{enumerate}

     \item  If there are two cycles among these, say $\ell_1$ and $\ell_2$, which intersect each other in more than one point, say $a$ and $b$, then $\ell_1,\ell_2$ will look as in \cref{fig:OuterCycDecPart1}. 
     Let the bottom arc of $\ell_1$ joining $x$ to $y$ intersect $\ell_2$ in $t$ additional points as shown. 
     First perform the necessary number of $M_1$-moves to reach the stage in \cref{fig:OuterCycDecPart2}.
     At this point, the top part of the $\ell'_2$ cycle between $a$ and $b$ lies on the same side of the bottom part of the $\ell'_1$ cycle.
     Now, perform an $M_2$-move to increase the number of edges of $c$ covered by $\ell'_1$ as depicted in \cref{fig:OuterCycDecPart3}. 
     Since these moves preserve the sign, the cycle decomposition containing $(t+2)$ transformed cycles in place of two original cycles will have the same sign.

\begin{figure}[h!]
\begin{subfigure}{.45\linewidth}
\centering
\includegraphics[scale=0.5]{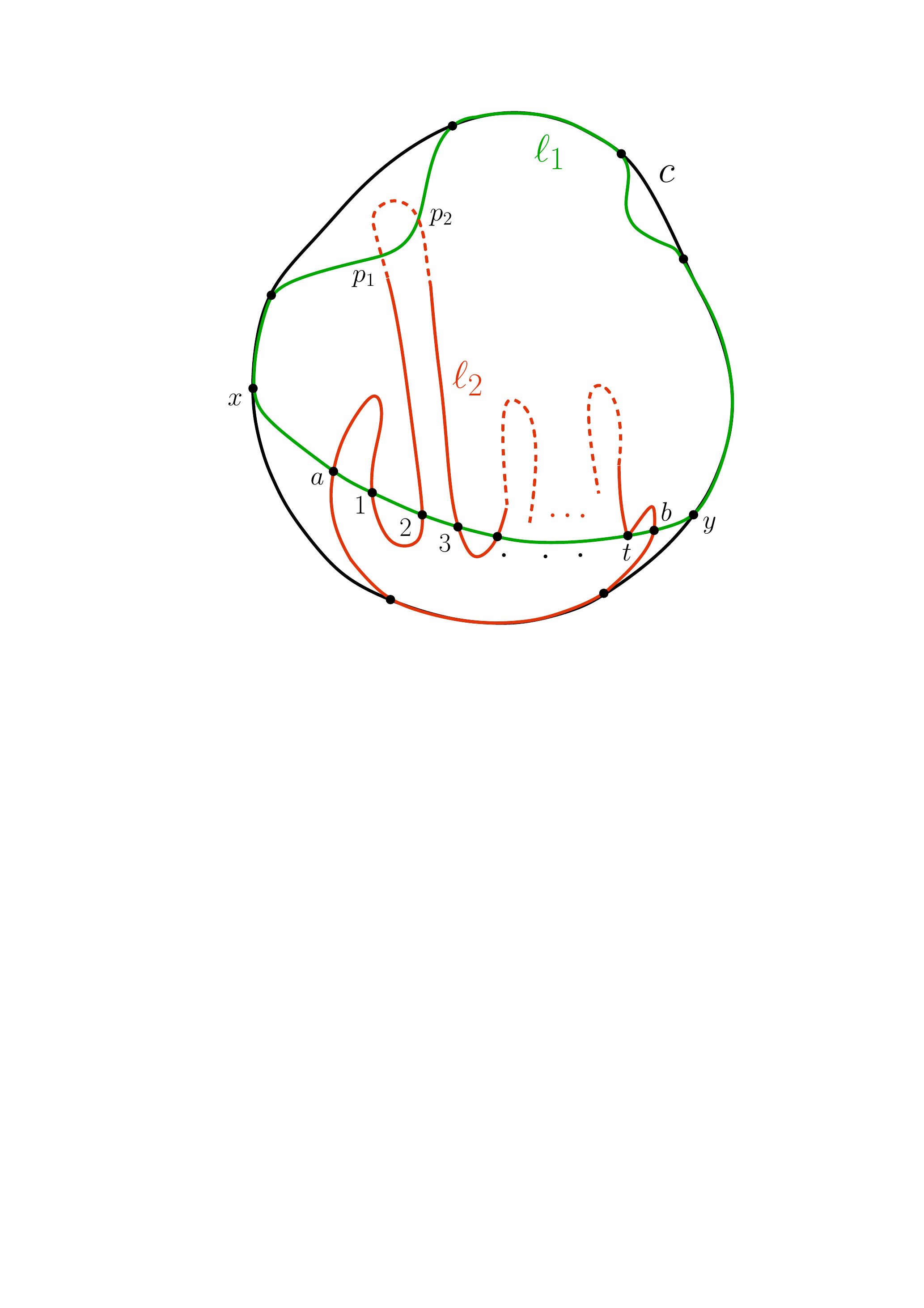}
\caption{The original cycles $\ell_1$ and $\ell_2$.}
 \label{fig:OuterCycDecPart1}
\end{subfigure}
\begin{subfigure}{.45\linewidth}
\centering
\includegraphics[scale=0.5]{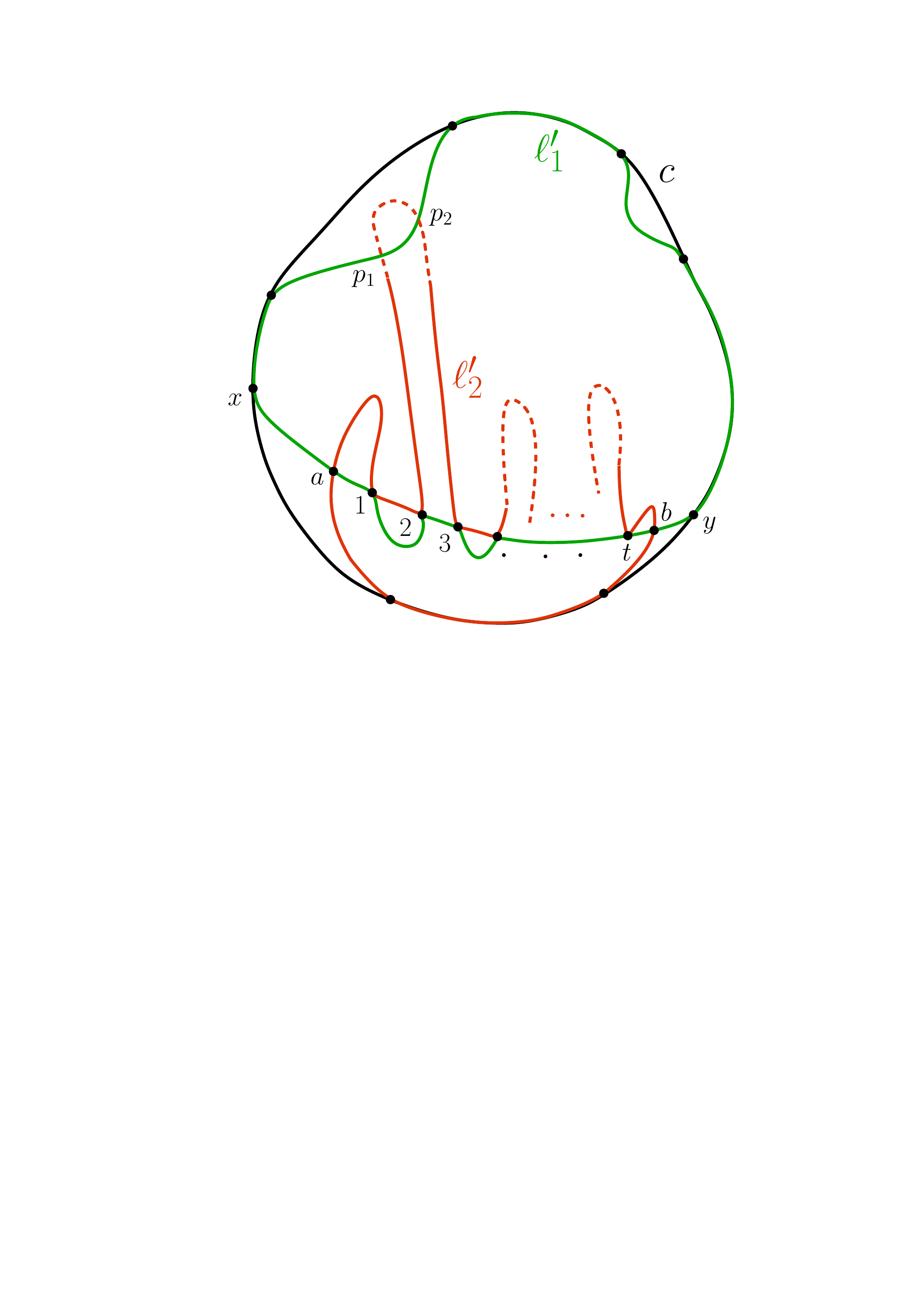}
  \caption{The new cycles $\ell'_1$ and $\ell'_2$ after several $M_1$-moves.}
   \label{fig:OuterCycDecPart2}
\end{subfigure}\\[1ex]
\begin{subfigure}{\linewidth}
\centering
\includegraphics[scale=0.5]{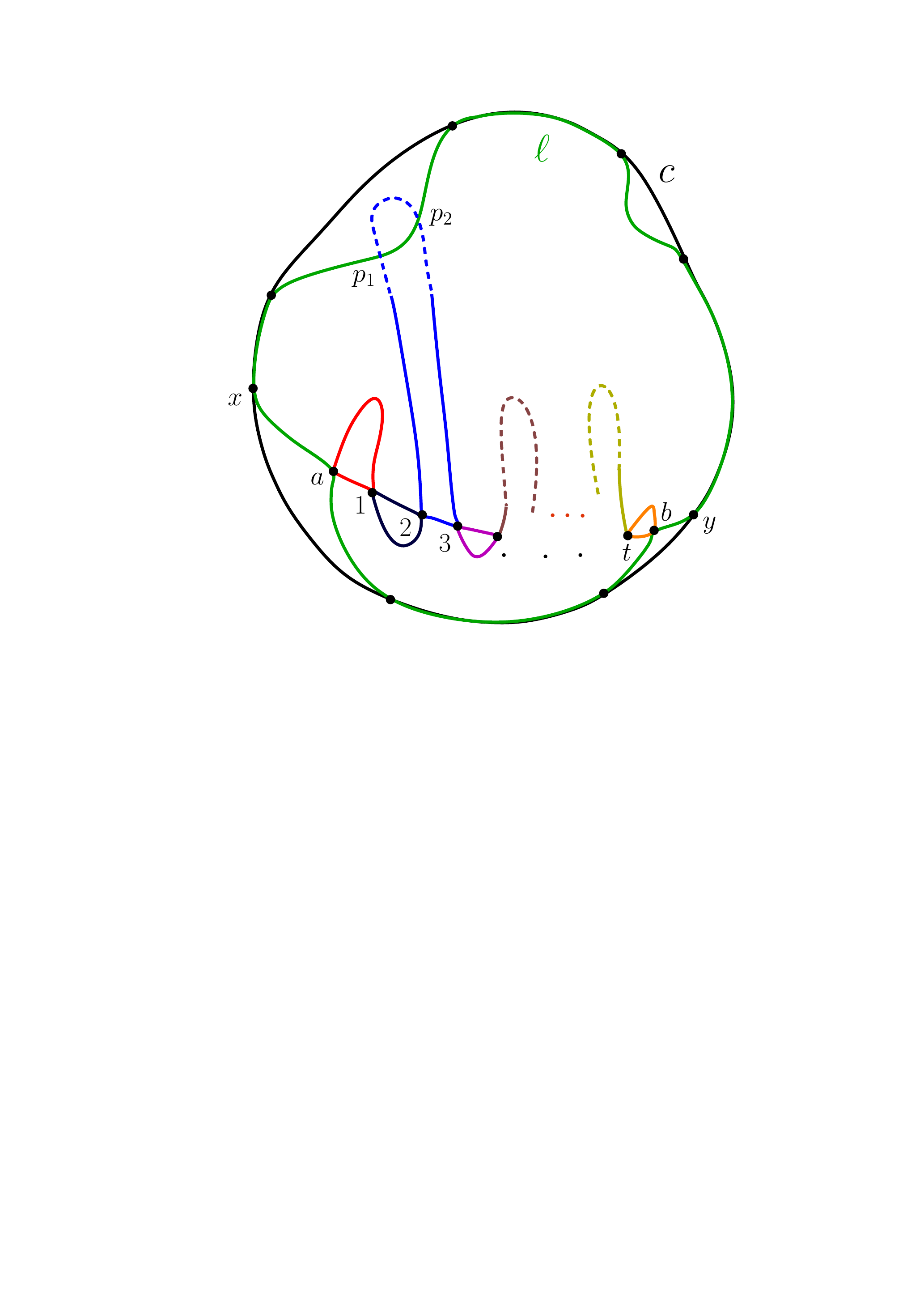}
  \caption{The final cycles after an $M_2$-move. Note that there are a total of $t+2$ cycles now.}
  \label{fig:OuterCycDecPart3}
\end{subfigure}
\caption{Two cycles $\ell_1$ and $\ell_2$ intersecting the boundary cycle $c$, and intersecting each other in more than one point in \cref{fig:OuterCycDecPart1}. In \cref{fig:OuterCycDecPart3}, the cycle $\ell$ intersects $c$ in more edges than $\ell_1$. }
\end{figure}

     \item If no two cycles of $\mathcal{D}$ which have a common edge with $c$ intersect each other in more than one point, then we have a certain number, say $t \geq 3$, of cycles intersecting $c$. 
     Focus on one of the cycles, $\ell_1$ say. It will intersect
     another cycle at a vertex of $c$. Call it $\ell_2$.
     Now following $\ell_1$ in the interior, find the first of these $t$ cycles and call it $\ell_t$.
     The situation will look as in left of  \cref{fig:M3 move used in lemma}.
     Now perform an $M_3$-move to increase the number of edges of $c$ covered by $\ell_1$ and arrive at the right of \cref{fig:M3 move used in lemma}. The resulting cycle decomposition will have the same sign.

\begin{figure}[h!]
        \centering
        \begin{subfigure}{.45\textwidth}
  \centering
  \includegraphics[scale=0.5]{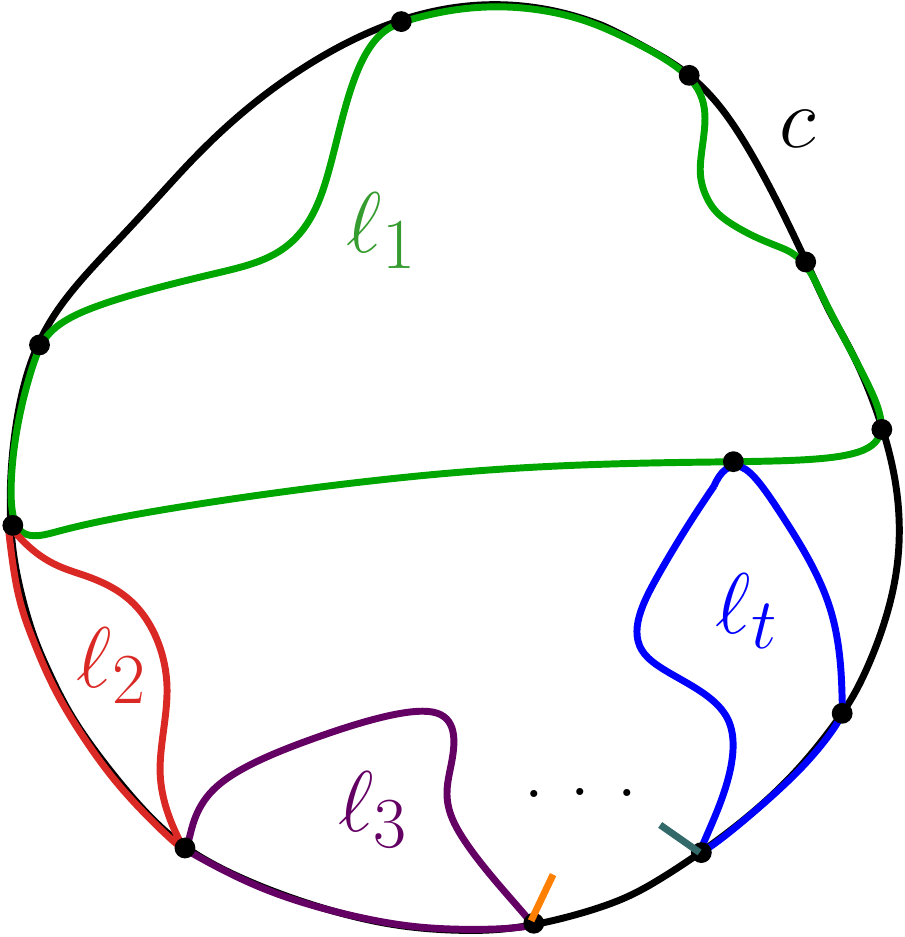}
\end{subfigure}
\begin{subfigure}{.45\textwidth}
  \centering
  \includegraphics[scale=0.5]{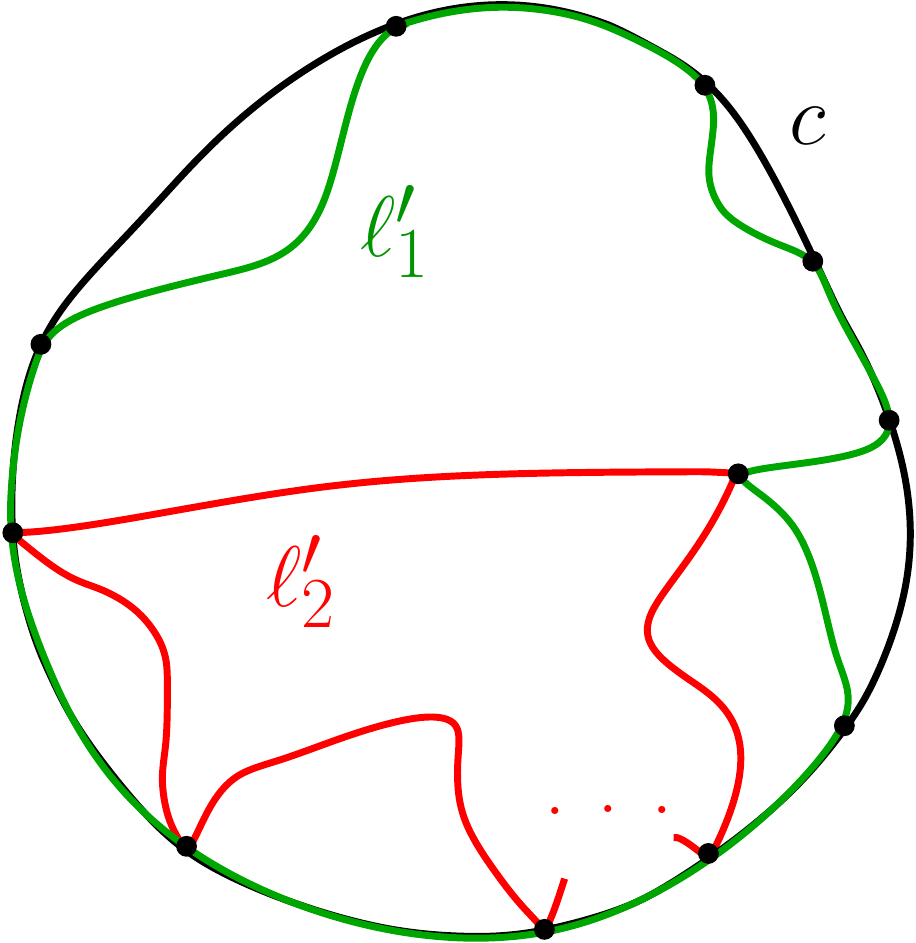}
\end{subfigure}
        \caption{A cycle decomposition where the cycles intersecting the boundary cycle are shown before and after an $M_3$-move.}
        \label{fig:M3 move used in lemma}
    \end{figure} 
 \end{enumerate}

Apply these cases inductively. 
Notice that we might need to alternate between these two. In each case, the number of edges in the intersection of $\ell_1$ and $c$ increases. As the number of edges in $c$ is finite, the process of performing these moves will eventually stop and we will end up with a cycle decomposition containing $c$ with the sign same as that of $\mathcal{D}$.
 \end{proof}
 
Now we will see a result analogous to~\cref{lem:IndCycDecomInDTrail} in the case of bipartite graphs.

\begin{theorem}
\label{thm:IndCycDecom}
Let $G$ be a connected bipartite even plane {multigraph}. Then all cycle decompositions $\mathcal{D}$ of $G$ will have same sign.
\end{theorem}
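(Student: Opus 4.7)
My plan is to prove the statement by induction on $|E(G)|$, using \cref{lem:BdryCyclDecLemma} as the key reduction. The base case $|E(G)|=0$ is trivial. For the inductive step, given two cycle decompositions $\mathcal{D}_1$ and $\mathcal{D}_2$ of $G$, I first invoke \cref{lem:BdryCyclDecLemma} to replace each by an outer cycle decomposition $\mathcal{D}_i' = \{c_1,\dots,c_k\} \cup \mathcal{E}_i$ of the same sign, where $c_1,\dots,c_k$ are the boundary cycles of the outer face and $\mathcal{E}_i$ is a cycle decomposition of the smaller multigraph $G^* := G \setminus \bigcup_j E(c_j)$. Since $\prod_j(-1)^{\chi(c_j)+1}$ is common to both $\mathcal{D}_1'$ and $\mathcal{D}_2'$, it suffices to show $\prod_{d\in\mathcal{E}_1}(-1)^{\chi(d)+1} = \prod_{d\in\mathcal{E}_2}(-1)^{\chi(d)+1}$.

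Next, I decompose $G^*$ into its connected components $H_1,\dots,H_r$, each of which is a connected bipartite even plane multigraph with strictly fewer edges than $G$. Every cycle of $\mathcal{E}_i$ lies entirely in one component, so by induction applied to each $H_j$ the ``intrinsic'' product $\prod_{d \in \mathcal{E}_i \cap H_j}(-1)^{\chi_{H_j}(d)+1}$ is the same for $i=1,2$, where $\chi_{H_j}(d)$ counts only vertices of $V(H_j)$ enclosed by $d$ in the inherited planar embedding of $H_j$.

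The hard part will be reconciling $\chi(d)$ (which counts all vertices of $V(G)$ inside $d$) with $\chi_{H_j}(d)$; they differ by the number of ``foreign'' vertices---lying in another component $H' \neq H_j$ of $G^*$ (necessarily entirely nested inside $d$), or isolated in $G^*$---strictly enclosed by $d$. To show the resulting correction $\prod_{d \in \mathcal{E}_i}(-1)^{\chi(d)-\chi_{H(d)}(d)}$ is invariant, I plan to swap the order of summation so that each vertex $v \in V(G)$ contributes the parity of the number of cycles of $\mathcal{E}_i$ enclosing $v$ but not lying in $v$'s own component. For each such component $H'$, since $v \notin V(H')$ the point $v$ lies in a face $f_v$ of $H'$, and the count equals the number of cycles of $\mathcal{E}_i \cap H'$ enclosing $f_v$. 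A winding-number invariant on $H'$---for adjacent faces $f,f'$ of $H'$ sharing an edge $e$, the relation $W(f)+W(f') \equiv 1 \pmod 2$ holds because $e$ belongs to exactly one cycle, and combined with $W(f_\infty)=0$ together with connectedness of the dual of $H'$---forces this parity to depend only on $f_v$ rather than on $\mathcal{E}_i$. Summing these invariant contributions across all $v$ and all $H'$ not containing $v$ yields that the correction is invariant, closing the induction.
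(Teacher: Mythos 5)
Your proof is correct, and it takes a genuinely different route from the paper's in its second half. Both arguments lean on \cref{lem:BdryCyclDecLemma} to force the boundary cycles into the decomposition, but from there the paper never leaves the ambient graph: it iterates the lemma on $G_1, G_2, \dots$ obtained by peeling off successive outer boundaries, always measuring signs via $\chi$ taken in the original $G$ (this works because the $M_1$-, $M_2$-, $M_3$-moves preserve the sign relative to any fixed ambient vertex set), and concludes by noting that every decomposition is driven to the same canonical ``nested boundaries'' decomposition. You instead set up a clean induction on $|E(G)|$, apply the theorem recursively to the components of $G^*$ with signs measured \emph{intrinsically} via $\chi_{H_j}$, and then reconcile $\chi$ with $\chi_{H_j}$ by a face-parity argument: since each edge of $H'$ lies in exactly one cycle of any cycle decomposition, the parity of the number of cycles enclosing a given face satisfies $W(f)+W(f')\equiv 1 \pmod 2$ across every edge, so with $W(f_\infty)=0$ and connectedness of the dual it is the checkerboard colouring of the faces of the even multigraph $H'$, independent of the decomposition; summing over foreign vertices $v$ then shows the correction factor $\prod_{d}(-1)^{\chi(d)-\chi_{H(d)}(d)}$ is invariant. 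This extra step is exactly the issue the paper's proof sidesteps (and leaves somewhat implicit) by keeping all signs relative to $G$; your version pays for making it explicit with the winding-number lemma, but gains a self-contained induction whose hypothesis is literally the theorem statement. A small simplification worth noting: you could avoid the correction entirely by strengthening the induction hypothesis to ``all cycle decompositions of $H$ have the same sign when $\chi$ counts vertices of any fixed plane multigraph containing $H$,'' which is what the paper's moves actually establish.
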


\begin{proof}
Since $G$ is even and connected, the boundary of the outer face of $G$ can be decomposed into cycles. 
For simplicity, we suppose the boundary is a single cycle $c$. If not, the argument below extends in an obvious way to each component of the boundary.

Let $\mathcal{D}$ be a cycle decomposition of $G$. Using \cref{lem:BdryCyclDecLemma}, we obtain another cycle decomposition $\mathcal{D}_1$ containing $c$ which has the same sign as $\mathcal{D}$. Let $G_1$ be obtained from $G$ by removing all the edges of $c$ and the resulting isolated vertices.
Note that although $G_1$ can be disconnected, the regions enclosed by its connected components $G_{1,1}, G_{1,2}, \dots, G_{1,t}$ will not intersect.
Now, $\mathcal{D}_1\setminus\{c\}$ is a cycle decomposition of $G_1$. Again using \cref{lem:BdryCyclDecLemma} on $G_1$,
 we obtain a cycle decomposition $\mathcal{D}_2$ of $G$ containing $c$ and $d_{1,1}, d_{1,2}, \dots , d_{1,t}$, the boundary cycles of $G_{1,1}, G_{1,2}, \dots , G_{1,t}$ respectively, such that $\sgn \mathcal{D}_1 = \sgn \mathcal{D}_2$. Now remove $d_{1,1}, d_{1,2}, \dots , d_{1,t}$ from $G_1$ to obtain $G_2$ and
continue this process. Since $G$ is finite, this process must stop. In fact, it will stop at the cycle decomposition obtained by successively including outer boundaries of $G_1, G_2 $ and so on.
\end{proof}

Recall the $i$-projection, the sign of a directed loop and the notation $e_i$ defined in \cref{def:iproj}, \eqref{eqn:NewIntforKcatesian} and~\cref{sec:MainRe} respectively.
By the fact that bipartite graphs only have even cycles and by \cref{thm:IndCycDecom}, we have the following result.

\begin{corollary}
Let $G_1,\dots,G_k$ be plane simple naturally labeled bipartite graphs and $P$ be their Cartesian product.
Let $\ell=(w_0,w_1,\dots,w_{2s-1}, w_{2s}=w_0)$ be a directed even loop in $P$ and $\mathcal{D}_{i} $ be an arbitrary cycle decomposition of the $i$-projection $\tilde{\ell}_{i}$ for $i\in[k]$. 
Then
\[
\sgn(\ell) = -\prod_{i=1}^{k-1}(-1)^{e_i} \,\,\prod_{j=1}^k \sgn (\mathcal{D}_{j}),
\]
and is well-defined. 
In particular, there is no restriction on the choice of cycle decomposition of any $i$-projection in 
the monopole-dimer model for Cartesian product of bipartite graphs.
\end{corollary}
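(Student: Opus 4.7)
The plan is to reduce the result to an application of \cref{thm:IndCycDecom} and the observation about the simplified sign formula made at the start of \cref{sec:MDMbipartite case}. The formula for $\sgn(\ell)$ in \eqref{eqn:NewIntforKcatesian} was originally defined using a directed cycle decomposition $\mathcal{D}_i$ compatible with the directed trail $\tilde{\ell}_i$, and well-definedness in that setting is \cref{lem:IndCycDecomInDTrail}. The task here is to show that the formula continues to produce the same value for an \emph{arbitrary} (undirected) cycle decomposition of $\tilde{\ell}_i$ when all $G_i$ are bipartite.

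First, I would observe that since each $G_i$ is bipartite, the $i$-projection $\tilde{\ell}_i$ is also bipartite as a plane multigraph: its edges are a subset (with multiplicity) of $E(G_i)$, and any cycle in $\tilde{\ell}_i$ projects to a closed walk in $G_i$ of the same length, hence even. Consequently, in \cref{Def:DCD} only the second case in \eqref{eqn:signDCD} can occur, and the sign of a cycle decomposition of $\tilde{\ell}_i$ reduces to $\prod_{d \in \mathcal{D}_i} (-1)^{\chi(d)+1}$. In particular, direction plays no role, so specifying an orientation on each cycle of $\mathcal{D}_i$ is irrelevant for computing $\sgn(\mathcal{D}_i)$, and we may treat $\mathcal{D}_i$ as an undirected cycle decomposition.

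Next, I would note that $\tilde{\ell}_i$ is an even plane multigraph (each vertex has even degree because $\ell$ is a closed directed trail, which transfers to $\tilde{\ell}_i$ under contraction). It need not be connected, but each connected component is itself a connected bipartite even plane multigraph, and any cycle decomposition of $\tilde{\ell}_i$ restricts to a cycle decomposition of each component. Applying \cref{thm:IndCycDecom} component by component then shows that all cycle decompositions of $\tilde{\ell}_i$ yield the same sign.

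Finally, since a compatible directed cycle decomposition of the trail $\tilde{\ell}_i$ always exists (obtained for instance by tracing $\tilde{\ell}_i$ and splitting it at repeated vertices), its sign agrees with that of any arbitrary cycle decomposition by the previous step. Hence the right-hand side of the displayed formula is unchanged when each $\mathcal{D}_i$ is replaced by an arbitrary cycle decomposition of $\tilde{\ell}_i$, and the value equals the well-defined $\sgn(\ell)$ of \eqref{eqn:NewIntforKcatesian}. I do not foresee a hard step here: the only subtlety is keeping the bookkeeping straight when $\tilde{\ell}_i$ is disconnected, which is handled by applying \cref{thm:IndCycDecom} to each component separately.
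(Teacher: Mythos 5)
Your proposal is correct and matches the paper's (very brief) justification: the paper simply notes that bipartite graphs have only even cycles, so the direction-dependence in \cref{Def:DCD} vanishes, and then invokes \cref{thm:IndCycDecom} for well-definedness. Your additional care with disconnected projections (applying \cref{thm:IndCycDecom} componentwise) and with verifying that cycles in $\tilde{\ell}_i$ are even is a welcome filling-in of details the paper leaves implicit, but the route is the same.
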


\section{Three-dimensional grids} 
\label{sec:MDM3D}
Recall that $P_n$ is the path graph on $n$ vertices. Assign to $P_n$ the natural labelling increasing from one leaf to another, and denote its oriented variant with the canonical orientation as $(P_n,\mathcal{O}_n)$.
Consider the two-dimensional grid graph $P_{\ell}\square P_m=\{(p,q) \mid p \in [\ell],q \in [m]\}$ whose vertex $(p,q)$ has label 
$2s\ell+p$ if $q=2s+1$ and $2s\ell-p+1$ if $q=2s$.
Such a `snake-like' labelling is known as a \emph{boustrophedon labelling}~\cite{hammersley-menon-1970}.
\cref{fig:LabelledP4P3} shows this labelling on $P_4\square P_3$. 
With the canonical orientation, denote this graph as $(P_{\ell}\square P_m,\mathcal{O}_{\ell,m})$. 
For the purposes of our next result, we will think of the Cartesian product of $P_{\ell} \square P_m$ with $P_n$ as embedded in $\mathbb{Z}^3$ where the coordinate axes $x, y, z$ are aligned parallel to the edges in $P_\ell, P_m$ and $P_n$ respectively.
\begin{theorem} 
\label{thm:PF3Dgrid}
Let $(G,\mathcal{O})$ be the oriented Cartesian product of $(P_{\ell}\square P_m,\mathcal{O}_{\ell,m})$ with $(P_n,\mathcal{O}_n)$. Let vertex weights be $x$ for all vertices of $G$, and edge weights be $a$, $b$, $c$ for the edges 
along the three coordinate axes. Then the partition function of the monopole-dimer model on $G$ is given by
        \begin{multline*}
     \mathcal{Z}_{G}\equiv \mathcal{Z}_{\ell,m,n}=\prod_{j=1}^{\lfloor n/2 \rfloor} \prod_{s=1}^{\lfloor m/2 \rfloor}\prod_{k=1}^{\lfloor \ell/2 \rfloor}\bigg(x^2+4a^2\cos^2{\frac{\pi k}{\ell+1}}+4b^2\cos^2{\frac{\pi s}{m+1}}+4c^2\cos^2{\frac{\pi j}{n+1}}\bigg )^4 \\ 
    \times \begin{cases}
        1 & \ell,n,m\in 2\mathbb{N},\\
       T_{n,m}^2(b,c;x) &\ell\notin 2\mathbb{N}, m,n\in 2\mathbb{N},\\
       T_{n,\ell}^2(a,c;x) &\ell,n\in 2\mathbb{N} , m\notin 2\mathbb{N},\\
       T_{n,m}^2(b,c;x)\, T_{n,\ell}^2(a,c;x)\, S_n(c;x) & \ell,m\notin 2\mathbb{N} , n\in 2\mathbb{N},\\
       T_{m,\ell}^2(a,b;x) &\ell,m\in 2\mathbb{N}, n\notin 2\mathbb{N},\\
       T_{n,m}^2(b,c;x)\, T_{m,\ell}^2(a,b;x) \, S_m(b;x)  &\ell,n\notin 2\mathbb{N} , m\in 2\mathbb{N},\\
       T_{n,\ell}^2(a,c;x)\, T_{m,\ell}^2(a,b;x) \, S_{\ell}(a;x) &\ell\in 2\mathbb{N}, m,n\notin 2\mathbb{N},\\
       x \ T_{n,m}^2(b,c;x)\, T_{n,\ell}^2(a,c;x)\, T_{m,\ell}^2(a,b;x)\, S_n(c;x) \, S_m(b;x)\, S_{\ell}(a;x) 
       & \ell,m,n\notin 2\mathbb{N},
    \end{cases}
   \end{multline*}
   where
   \begin{equation*}
    S_n(c;x)=\prod_{k=1}^{\lfloor n/2\rfloor}\bigg (x^2+4c^2\cos^2{\frac{\pi k}{n+1}}\bigg),
    \end{equation*}
and
\begin{equation*}
    T_{n,\ell}(a,b;x)=\prod_{j=1}^{\lfloor n/2 \rfloor}\prod_{k=1}^{\lfloor \ell/2 \rfloor} \bigg(x^2+4a^2\cos^2{\frac{\pi k}{\ell+1}}+4b^2\cos^2{\frac{\pi j}{n+1}}\bigg).
\end{equation*}
\end{theorem}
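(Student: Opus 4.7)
The plan is to apply \cref{thm:DetFor2}, giving $\mathcal{Z}_G=\det\mathcal{K}_G$, and then to compute this determinant by exploiting a tensor-product structure of $K := \mathcal{K}_G-xI$ whose square turns out to be a Kronecker sum of $T_\ell^{\,2}$, $T_m^{\,2}$ and $T_n^{\,2}$, directly producing the cosine triples in the theorem.

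\emph{Tensor structure of $K$.} Unpacking \cref{def:ortd Cart} for $k=3$: the $G_3$-edges are oriented from low to high $z$ with weight $c$; the $G_2$-orientations alternate with $z$-parity; and the $G_1$-orientations alternate with the parity of $y+z$. Ordering the vertices lexicographically by $(z,y,x)$, and using the analogous 2D decomposition for each horizontal slice, gives
\begin{equation*}
K \;=\; D_n\otimes D_m\otimes T_\ell \;+\; D_n\otimes T_m\otimes I_\ell \;+\; T_n\otimes I_m\otimes I_\ell,
\end{equation*}
where $T_r$ is the skew-symmetric tridiagonal Kasteleyn matrix of $(P_r,\mathcal{O}_r)$ with edge weight $a,b,c$ for $r=\ell,m,n$ respectively, and $D_r=\mathrm{diag}(1,-1,1,-1,\ldots)$.

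\emph{Key identity for $K^2$.} A direct entrywise check gives $D_rT_r+T_rD_r=0$ for each $r$, and using this anticommutation to expand $K^2$ cancels every cross-term. One arrives at the Kronecker sum
\begin{equation*}
K^2 \;=\; I_n\otimes I_m\otimes T_\ell^{\,2} \;+\; I_n\otimes T_m^{\,2}\otimes I_\ell \;+\; T_n^{\,2}\otimes I_m\otimes I_\ell.
\end{equation*}
Since $T_r^{\,2}$ has eigenvalues $-4w_r^{\,2}\cos^2\frac{k\pi}{r+1}$ for $k\in[r]$ with $w_r$ the relevant edge weight, the spectrum of $K^2$ is
\begin{equation*}
\mathrm{spec}(K^2)=\Bigl\{\,-\bigl(4a^2\cos^2\!\tfrac{k\pi}{\ell+1}+4b^2\cos^2\!\tfrac{s\pi}{m+1}+4c^2\cos^2\!\tfrac{j\pi}{n+1}\bigr)\;:\;(k,s,j)\in[\ell]\times[m]\times[n]\Bigr\}.
\end{equation*}
Writing the bracketed value as $\lambda_{k,s,j}^{\,2}$ and using that $K$ is real skew-symmetric, the eigenvalues of $K$ are $\pm i\lambda_{k,s,j}$, and those of $\mathcal{K}_G$ are $x\pm i\lambda_{k,s,j}$.

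\emph{Orbit bookkeeping.} Pairing $x+i\lambda$ with $x-i\lambda$ and grouping by orbits of $(k,s,j)$ under the three flip involutions $k\!\mapsto\!\ell\!+\!1\!-\!k$, $s\!\mapsto\!m\!+\!1\!-\!s$, $j\!\mapsto\!n\!+\!1\!-\!j$ yields
\begin{equation*}
\det\mathcal{K}_G \;=\; \prod_{\text{orbits }\mathcal{O}}\bigl(x^2+\lambda_{\mathcal{O}}^{\,2}\bigr)^{|\mathcal{O}|/2},
\end{equation*}
with the convention that the orbit at $\lambda=0$ (possible only when all three side lengths are odd) is read as a single factor of $x$. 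A generic orbit has size $8$ and yields $(x^2+\lambda^2)^4$, i.e.\ the main fourth-power factor of the theorem. When exactly one of $k,s,j$ sits at its flip-fixed value --- which requires the corresponding side length to be odd --- the orbit has size $4$, and its product over the remaining two indices produces $T_{n,m}^2(b,c;x)$, $T_{n,\ell}^2(a,c;x)$, or $T_{m,\ell}^2(a,b;x)$ respectively; when exactly two are fixed, the orbit has size $2$ and yields one of $S_n(c;x),S_m(b;x),S_\ell(a;x)$; and the triple-fixed orbit (all three side lengths odd) contributes the extra factor $x$. Matching these contributions case by case to the parities of $\ell,m,n$ reproduces the eight branches in the statement. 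The main obstacle is this parity case-analysis; the key structural ingredient is the Kronecker-sum identity for $K^2$.
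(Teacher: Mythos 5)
Your proof is correct, but it takes a genuinely different route from the paper's. The paper keeps the boustrophedon ordering, writes $\mathcal{K}_{\ell,m,n}= I_n \otimes I_m \otimes T_{\ell}(-a,x,a)+I_n \otimes T_m(-b,0,b) \otimes J_{\ell}+T_n(-c,0,c) \otimes J_m \otimes J_{\ell}$ with $J_k$ the antidiagonal flip, conjugates by $u_n\otimes u_m\otimes u_\ell$ to diagonalize each Toeplitz factor, and then carries out a three-stage block-diagonalization down to explicit $2\times 2$ blocks $Y^{\pm}_{k,s,j}$ whose determinants are the quadratic factors, with the parity branches coming from the leftover odd-index blocks at each stage. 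You instead pass to the lexicographic ordering (a simultaneous row--column permutation, so the determinant is unchanged), where the alternating orientation of \cref{def:ortd Cart} appears as diagonal sign matrices $D_r$ rather than antidiagonal flips, and you use the anticommutation $D_rT_r=-T_rD_r$ --- valid precisely because $T_r$ is tridiagonal with zero diagonal --- to cancel all cross terms in $K^2$ and exhibit it as a Kronecker sum of $T_\ell^2$, $T_m^2$, $T_n^2$. That yields the full spectrum of $K^2$ in one stroke, and the real skew-symmetry of $K$ (hence normality and conjugate-paired imaginary eigenvalues) justifies the half-multiplicities in your orbit count, including the one-dimensional kernel giving the lone factor of $x$ when $\ell,m,n$ are all odd; your flip-orbit bookkeeping then reproduces exactly the paper's eight parity cases. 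Your approach is arguably cleaner and would extend directly to \cref{thm:PFkDgrid}, since the same anticommutation gives a $d$-fold Kronecker sum; what the paper's iterated block reduction buys is the explicit similarity transforms and intermediate blocks $F_j$, $F^{\pm}_{s,j}$, which it reuses verbatim (and needs, e.g.\ for the factor $\det F_{(m_d+1)/2}$) in the higher-dimensional proof.
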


\begin{figure}[h!]
    \centering
    \includegraphics[scale=0.80]{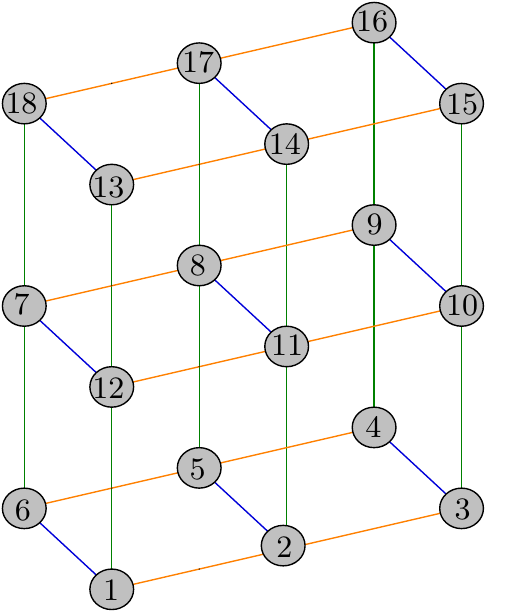}
    \caption{The boustrophedon labelling on $P_3\square P_2 \square P_3$.}
    \label{fig:labelEg}
\end{figure}

\begin{remark}
\label{Rem:boustro_label}
The \emph{boustrophedon labelling} that induces the orientation $\mathcal{O}$ over the graph $G$ is as follows. The vertex $(p,q,r)$ has label
\begin{equation*}
\begin{cases}
2t\ell m+{2s\ell}+p &  q=2s+1, r=2t+1,\\
      2t\ell m+2s\ell-p+1 &  q=2s, r=2t+1,\\
      2t\ell m-{2s\ell}-p+1 &  q=2s+1, r=2t,\\
      2t\ell m-2s\ell+p &  q=2s, r=2t,
\end{cases}
\end{equation*}
where $p \in [\ell],q \in [m]$ and $r \in [n]$. 
\cref{fig:labelEg} shows this labelling on the graph $P_3\square P_2 \square P_3$. 
\end{remark}

\begin{proof}
The signed adjacency matrix for the graph $(G,\mathcal{O})$ with the above labelling can be written as
\begin{equation}
    \mathcal{K}_{\ell,m,n}= I_n \otimes I_m \otimes T_{\ell}(-a,x,a)+I_n \otimes T_m(-b,0,b) \otimes J_{\ell}+T_n(-c,0,c) \otimes J_m \otimes J_{\ell},
\end{equation}
where $T_k(-s,z,s)$ is the $k\times k$ tridiagonal Toeplitz matrix with diagonal entries $z$, subdiagonal entries $-s$ and superdiagonal entries $s$. Let $J_k$ be the $k\times k$ antidiagonal matrix with all antidiagonal entries equal to $1$ and let $\iota = \sqrt{-1}$.
Transform $T_k(-s,z,s)$ into the diagonal matrix 
$$
D_k =\mathrm{diag}\bigg(z+2\iota s\cos{\frac{\pi}{k+1}},
\dots,z+2\iota s\cos{\frac{k\pi}{k+1}}\bigg)
$$ 
using the standard unitary similarity transformation~\cite[Section 4]{Fisher} given by the matrix $u_k$ whose entries are given by
$(u_k)_{p,q}=\sqrt{\frac{2}{k+1}}\iota^{p}\sin (\pi pq/(k+1))$. A short calculation shows that
\begin{equation}
\label{Eqn:UTforToeplitz}
(u_k)^{-1}J_k u_k=
\iota^{k-1}\begin{pmatrix}
   & & &(-1)^{k-1}\\
     & & (-1)^{k-2}&\\
     &\iddots & &\\
    (-1)^0& & & 
  \end{pmatrix}.
\end{equation}
Let $\sim$ denote the equivalence relation on matrices given by similarity. Then, using the unitary transform $u_n\otimes u_m \otimes u_{\ell}$, we find that 
\begin{multline*}
     \mathcal{K}_{\ell,m,n} \sim I_n \otimes I_m \otimes\, {\mathrm{diag}\bigg( x+2\iota a\cos{\frac{\pi}{\ell+1}},\dots,x+2\iota a\cos{\frac{\ell\pi}{\ell+1}} \bigg)}
  \\
    +I_n \otimes 2\iota b\,
{\mathrm{diag}\bigg( \cos{\frac{\pi}{m+1}},\dots,\cos{\frac{m\pi}{m+1}}\bigg)}
   \otimes \iota^{\ell-1}
\begin{pmatrix}
   & &(-1)^{\ell-1}\\
     &\iddots &\\
    (-1)^0& & 
  \end{pmatrix}\\
   + 2\iota c
   \,
{\mathrm{diag}\bigg( \cos{\frac{\pi}{n+1}},\dots,\cos{\frac{n\pi}{n+1}}\bigg)} \\
\otimes \iota^{m-1}\begin{pmatrix}
   & &(-1)^{m-1}\\
     &\iddots &\\
    (-1)^0& & 
  \end{pmatrix} 
  \otimes \iota^{\ell-1}\begin{pmatrix}
   & &(-1)^{\ell-1}\\
     &\iddots &\\
    (-1)^0& & 
  \end{pmatrix}.
\end{multline*}
Note that each term in the above sum is a block diagonal matrix as the first matrix in each tensor factor is diagonal. 
Therefore, $\mathcal{K}_{\ell,m,n}$ is similar to an $n\times n$ block diagonal matrix with the block 
$F_j$ for $j\in [n]$ given by
\begin{multline*}
   F_j = I_m \otimes\, \mathrm{diag}\bigg( x+2\iota a\cos{\frac{\pi}{\ell+1}},\dots,x+2\iota a\cos{\frac{\ell\pi}{\ell+1}} \bigg) \\
   + \begin{pmatrix}
   2\iota b\cos{\frac{\pi}{m+1}}& & &(-1)^{m-1}2\iota^{m}c\cos{\frac{j\pi}{n+1}}\\
    &\ddots&\iddots&\\
   &\iddots&\ddots&\\
   (-1)^02\iota^{m}c\cos{\frac{j\pi}{n+1}}&&& 2\iota b\cos{\frac{m\pi}{m+1}}
  \end{pmatrix} \otimes \iota^{\ell-1}\begin{pmatrix}
   & &(-1)^{\ell-1}\\
     &\iddots &\\
    (-1)^0& & 
  \end{pmatrix}. 
\end{multline*}
Define
\[
 \lambda^{m,n}_{s,j}=\sqrt{4b^2 \cos^2{\frac{s\pi}{m+1}}+4c^2 \cos^2{\frac{j\pi}{n+1}}},
 \quad s \in [m],\, j \in [n], 
 \]
and let 
\[
D_j=\begin{cases}
      \mathrm{diag}\Big(\iota\lambda^{m,n}_{1,j},-\iota \lambda^{m,n}_{1,j},\iota\lambda^{m,n}_{2,j},-\iota \lambda^{m,n}_{2,j},\dots,\iota\lambda^{m,n}_{\frac{m}{2},j},-\iota \lambda^{m,n}_{\frac{m}{2},j}\Big) &  m\text{ even},\\
      \mathrm{diag}\Big(\iota\lambda^{m,n}_{1,j},-\iota \lambda^{m,n}_{1,j},\iota\lambda^{m,n}_{2,j},-\iota \lambda^{m,n}_{2,j},\dots,\iota\lambda^{m,n}_{\lfloor\frac{ m}{2}\rfloor,j},-\iota \lambda^{m,n}_{\lfloor\frac{ m}{2}\rfloor,j},\iota \lambda^{m,n}_{\frac{ m+1}{2},j}\Big) &  m \text{ odd}.
  \end{cases}
  \]
The matrix 
\[
\begin{pmatrix}
   2\iota b\cos{\frac{\pi}{m+1}}& & &(-1)^{m-1}2\iota^{m}c\cos{\frac{j\pi}{n+1}}\\
    &\ddots&\iddots&\\
   &\iddots&\ddots&\\
   (-1)^02\iota^{m}c\cos{\frac{j\pi}{n+1}}&&& 2\iota b\cos{\frac{m\pi}{m+1}}
  \end{pmatrix},
  \]
when diagonalized, becomes equal to $D_j$. Thus, $F_j$ becomes similar to
  \begin{equation}
  \label{eqn:BlockofKG}
     I_m \otimes\, \mathrm{diag}\bigg( x+2\iota a\cos{\frac{\pi}{\ell+1}},\dots,x+2\iota a\cos{\frac{\ell\pi}{\ell+1}} \bigg)+
D_j  \otimes \iota^{\ell-1}\begin{pmatrix}
   & &(-1)^{\ell-1}\\
     &\iddots &\\
    (-1)^0& & 
  \end{pmatrix}.
  \end{equation}
Again, as the first matrix in both tensor products of \eqref{eqn:BlockofKG} is diagonal, each block $F_j$ is similar to a block diagonal matrix given by
$$ 
F_j\sim \begin{cases}
    \mathrm{diag}\big(F^+_{1,j},F^-_{1,j},\dots, F^+_{\frac{m}{2},j},F^-_{\frac{m}{2},j}\big) & m\text{ even,}\\
    \mathrm{diag}\Big(F^+_{1,j},F^-_{1,j},\dots,F^+_{\frac{m-1}{2},j},F^-_{\frac{m-1}{2},j},F^+_{\frac{m+1}{2},j}\Big) & m\text{ odd,}
\end{cases} 
$$
where
\begin{align*}
  F^{\pm}_{s,j} &=\mathrm{diag} \left( x+2\iota a\cos{\frac{\pi}{\ell+1}},\dots,x+2\iota a\cos{\frac{\ell\pi}{\ell+1}} \right)
  \pm \iota^{\ell} \lambda^{m,n}_{s,j}\begin{pmatrix}
   & &(-1)^{\ell-1}\\
     &\iddots &\\
    (-1)^0& & 
  \end{pmatrix}\\
 &=\begin{pmatrix}
   x+2\iota a\cos{\frac{\pi}{\ell+1}}& & &\pm (-1)^{\ell-1}\iota^{\ell}\lambda^{m,n}_{s,j}\\
    &\ddots&\iddots&\\
   &\iddots&\ddots&\\
   \pm(-1)^{0}\iota^{\ell}\lambda^{m,n}_{s,j}&&& x+2\iota a\cos{\frac{\ell\pi}{\ell+1}}
  \end{pmatrix}.
\end{align*}
Now defining
\[
Y^{\pm}_{k,s,j}=\begin{pmatrix}
  x+2\iota a\cos{\frac{k\pi}{\ell+1}}&\pm(-1)^{\ell-k}\iota^l\lambda^{m,n}_{s,j}\\
\pm(-1)^{k-1}\iota^{\ell}\lambda^{m,n}_{s,j} & x-2\iota a\cos{\frac{k\pi}{\ell+1}}
  \end{pmatrix}, \quad 1 \leq s\leq \lfloor (m+1)/2\rfloor,
\]
and performing simultaneous row and column interchanges on $F^{\pm}_{s,j}$,
we can write
\begin{align*}
  F^{\pm}_{s,j} &\sim \begin{cases}
      \mathrm{diag}(Y^{\pm}_{1,s,j},Y^{\pm}_{2,s,j},\dots,Y^{\pm}_{\lfloor\frac{\ell}{2}\rfloor,s,j}) &  \ell\text{ even,}\\
      \mathrm{diag}(Y^{\pm}_{1,s,j},Y^{\pm}_{2,s,j},\dots,Y^{\pm}_{\lfloor\frac{\ell}{2}\rfloor,s,j},\, x\pm \iota\lambda^{m,n}_{s,j}) &  \ell\text{ odd.}\\
  \end{cases}
\end{align*}
Note that 
\[
\det{Y^{+}_{k,s,j}}=\det{Y^{-}_{k,s,j}}={x^2+4a^2 \cos^2{\frac{k\pi}{\ell+1}}+4b^2 \cos^2{\frac{s\pi}{m+1}}+4c^2 \cos^2{\frac{j\pi}{n+1}}}, 
\]
which implies that
\begin{align*}
\det{ F^{+}_{s,j}} \, \det{ F^{-}_{s,j}} &=\prod_{k=1}^{\lfloor \ell/2\rfloor}
(\det{Y^{+}_{k,s,j}}\, \det{Y^{-}_{k,s,j}})  \times
    \begin{cases}
        1 & \ell\text{ even},\\
       x^2 + \left( \lambda^{m,n}_{s,j} \right)^2 & \ell\text{ odd}.
    \end{cases}
\end{align*}
Then we get,
\begin{align*}
\det{F_j}  &=\prod_{s=1}^{\lfloor m/2\rfloor} 
(\det{ F^{+}_{s,j}}\, \det{ F^{-}_{s,j}}) \times \begin{cases}
    1 &  m\text{ even,}\\
    \det{ F^{+}_{\frac{m+1}{2},j}} & m\text{ odd.}
\end{cases}
\end{align*}
Hence,
\begin{multline*}
    \det{F_j}=\prod_{s=1}^{\lfloor m/2\rfloor}\prod_{k=1}^{\lfloor \ell/2\rfloor}\bigg (x^2+4a^2 \cos^2{\frac{k\pi}{\ell+1}}+4b^2 \cos^2{\frac{s\pi}{m+1}}+4c^2 \cos^2{\frac{j\pi}{n+1}}\bigg )^2 \\
\hspace*{-2cm}    \times \begin{cases}
   1 &\ell,m\in 2\mathbb{N}, \\[0.5cm]
\ds \prod_{s=1}^{\lfloor m/2\rfloor} \left( x^2+(\lambda^{m,n}_{s,j})^2 \right)   &\ell\notin 2\mathbb{N},m\in 2\mathbb{N}, \\[0.5cm]
 \ds\prod_{k=1}^{\lfloor \ell/2\rfloor} \big(x^2+4a^2 \cos^2{\frac{k\pi}{\ell+1}}+4c^2 \cos^2{\frac{j\pi}{n+1}}\big)  &\ell\in 2\mathbb{N},m\notin 2\mathbb{N}, \\[0.5cm]
 \ds\prod_{k=1}^{\lfloor \ell/2\rfloor} \big(x^2+4a^2 \cos^2{\frac{k\pi}{\ell+1}}+4c^2 \cos^2{\frac{j\pi}{n+1}}\big) \\[0.5cm]
 \ds\times     \big(x+2\iota c \cos{\frac{j\pi}{n+1}} \big) \prod_{s=1}^{\lfloor m/2\rfloor} \left(x^2+(\lambda^{m,n}_{s,j})^2 \right) &\ell,m\notin 2\mathbb{N}.
   \end{cases}
\end{multline*}
Since, $\det{\mathcal{K}_{\ell,m,n}}=\prod_{j=1}^n\det{F_j}$ and {$\det{F_j}=\det{F_{n-j+1}}$} we obtain the result.
\end{proof}

We now make a few remarks about this result.
First, the orientation on $G$ is Pfaffian over all standard planes and $G$ is non-planar when at least two of $\ell,m,n$ are greater than 2. 
Second, although it is not obvious from \cref{thm:PF3Dgrid},
$\mathcal{Z}_{\ell,m,n}$ is always a polynomial in $x,a,b,c$ with nonnegative integer coefficients.
Third, $\mathcal{Z}_{\ell,m,n}$ is the fourth power of a polynomial when $\ell,m$ and $n$ are all even and the square of a polynomial when exactly two of $\ell,m$ and $n$ are even. 
Fourth, the formula in \cref{thm:PF3Dgrid} coincides with the already known partition function \cite{Ayyer2015ASM} of the monopole-dimer model for the two-dimensional grid graph when either of $\ell,m,n$ are equal to $1$.
Finally, although it is not obvious from the construction, the 
formula is symmetric in all three directions. That is to say, it is symmetric under any permutation interchanging $(a,\ell), (b,m)$ and $(c,n)$.

We now prove that our monopole-dimer model on Cartesian products satisfies an associativity property at least for path graphs.

\begin{proposition}
\label{prop:ASSOCofPlPmPn}
 The partition function of the monopole-dimer model on the oriented Cartesian product of $(P_{\ell}\square P_m,\mathcal{O}_{\ell,m})$ with $(P_n,\mathcal{O}_n)$ is the same as the partition function of the monopole-dimer model on the oriented Cartesian product of $(P_{\ell},\mathcal{O}_{\ell})$ with $(P_m\square P_n,\mathcal{O}_{m,n})$.
\end{proposition}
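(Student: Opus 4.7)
The plan is to compute both sides using \cref{thm:PF3Dgrid} and exploit the symmetry of its explicit product formula.

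Applying \cref{thm:PF3Dgrid} directly to the oriented Cartesian product of $(P_\ell \square P_m,\mathcal{O}_{\ell,m})$ with $(P_n,\mathcal{O}_n)$ gives that the left hand side equals the explicit product $\mathcal{Z}_{\ell,m,n}(a,b,c)$ displayed in that theorem. The task is to show that the right hand side equals the same quantity.

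For the right hand side $\mathcal{Z}_{P_\ell \square (P_m \square P_n)}$, the idea is first to establish a commutativity property: for any two simple plane graphs $G_1, G_2$, the monopole-dimer partition functions on $G_1 \square G_2$ and $G_2 \square G_1$ coincide. The underlying graphs are isomorphic under $(u_1,u_2) \mapsto (u_2,u_1)$, vertex and edge weights are intrinsic to the unordered product, and the sign of a loop in~\eqref{eqn:NewIntforKcatesian} is preserved under this swap: the roles of the projections $\tilde{\ell}_1$ and $\tilde{\ell}_2$ and their cycle decompositions are exchanged, and the asymmetric factor $(-1)^{e_1}$ shifts between the factors in a way that leaves the total product invariant (a short case check using \cref{def:ortd Cart}). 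With this commutativity in hand, I would apply \cref{thm:PF3Dgrid} to $(P_m \square P_n) \square P_\ell$ via the substitutions $\ell \to m$, $m \to n$, $n \to \ell$ and $a \to b$, $b \to c$, $c \to a$, concluding that the right hand side equals $\mathcal{Z}_{m,n,\ell}(b,c,a)$.

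The final step is to invoke the symmetry noted in the remarks following \cref{thm:PF3Dgrid}: the product formula $\mathcal{Z}_{\ell,m,n}$ is invariant under any permutation of the pairs $(a,\ell), (b,m), (c,n)$. Applied to the cyclic rotation $(a,\ell) \to (b,m) \to (c,n) \to (a,\ell)$, this gives $\mathcal{Z}_{\ell,m,n}(a,b,c) = \mathcal{Z}_{m,n,\ell}(b,c,a)$, so both sides agree and the proposition follows.

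I expect the main obstacle to be the commutativity step. While it is intuitively clear that reversing the order of the two factors of a Cartesian product cannot alter the monopole-dimer partition function, the formal verification must contend with the asymmetric conventions in \cref{def:ortd Cart} (where $G_k$-edges inherit their orientation directly from $\mathcal{O}_k$ while edges in earlier factors may be reversed depending on coordinate parities) and in~\eqref{eqn:NewIntforKcatesian} (which features the asymmetric factor $(-1)^{e_1}$ but not an analogous $(-1)^{e_2}$ for the last factor). A cleaner alternative would be to use \cref{thm:DetFor2} to write both sides as determinants of signed adjacency matrices of the same underlying 3D grid, both orientations being Pfaffian on each two-dimensional slice, and then to exhibit an explicit diagonal-conjugation relating the two matrices; either route closes the argument.
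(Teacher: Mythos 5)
Your route is genuinely different from the paper's, and its load-bearing step --- the commutativity of the oriented Cartesian product --- is left as an assertion; as you state it (for arbitrary simple plane graphs, verified by ``a short case check using \cref{def:ortd Cart}'') it does not go through. The sign \eqref{eqn:NewIntforKcatesian} makes no reference to the orientation at all, so \cref{def:ortd Cart} is not the relevant object. What actually has to be checked is that for every directed even loop $\ell$ the asymmetric factor $(-1)^{e_1}$ computed for $G_1\square G_2$ (counting $G_1$-edges of $\ell$ lying in copies $G_1(v_2)$ with $v_2$ even) agrees with the corresponding factor for $G_2\square G_1$ (counting $G_2$-edges in copies $G_2(v_1)$ with $v_1$ even), since the remaining factors $\sgn(\mathcal{D}_1)\sgn(\mathcal{D}_2)$ are order-independent. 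This loop-by-loop identity fails for general plane factors: in $K_3\square P_2$ the $4$-cycle through $(1,1),(3,1),(3,2),(1,2)$ has exactly one $G_1$-edge in an even copy but no $G_2$-edge in an even copy. It does hold whenever each factor is labelled so that adjacent vertices have labels of opposite parity --- true for $P_n$ and for the boustrophedon labelling of $P_m\square P_n$ --- because reducing all labels mod $2$ turns $\ell$ into a closed walk on the four-cycle $(\mathbb{Z}/2)^2$, and the evenness of the degree of the vertex $(0,0)$ in that walk is precisely the statement $e_1\equiv e_2'\pmod 2$. So the step is repairable for the case at hand, but it requires this parity argument (and the bipartite, parity-proper hypothesis), not a case check on the orientation; the rest of your plan (applying \cref{thm:PF3Dgrid} to the swapped product and invoking the symmetry of the product formula under permuting $(a,\ell),(b,m),(c,n)$) is then fine and not circular.

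You should also know that the paper's proof is a two-line observation that bypasses all of this: the orientation that \cref{def:ortd Cart} assigns to $(P_\ell\square P_m,\mathcal{O}_{\ell,m})\square(P_n,\mathcal{O}_n)$ and the one it assigns to $(P_\ell,\mathcal{O}_\ell)\square(P_m\square P_n,\mathcal{O}_{m,n})$ are literally the same orientation of the same graph, namely the canonical orientation induced by the boustrophedon labelling of $P_\ell\square P_m\square P_n$ from \cref{Rem:boustro_label}. Hence the two generalised adjacency matrices coincide and \cref{thm:DetFor2} gives equality of the partition functions immediately --- no product formula, no commutativity lemma, and no appeal to the symmetry of \cref{thm:PF3Dgrid}. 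This is essentially the ``cleaner alternative'' you sketch at the end, except that no diagonal conjugation is needed because the two matrices are equal on the nose.
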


\begin{proof}
The orientation on both the products is induced from the same boustrophedon labelling given in \cref{Rem:boustro_label}. Moreover, the partition function of the monopole-dimer model is the same as the partition function of the loop-vertex model on $P_{\ell}\square P_m \square P_n$ with the canonical orientation induced from boustrophedon labelling.
\end{proof}

\section{Higher-dimensional grid graphs}
\label{sec:HDgridGraph}

We now generalise the results from \cref{sec:MDM3D} to higher dimensional grid graphs. Let us consider $d$ path graphs $P_{m_{1}}, P_{m_{2}},\dots,P_{m_d}$, $\ell$ of which are odd. Without loss of generality, we assume that the first $\ell$ of these are odd, that is $m_1,\dots, m_\ell$ are odd.
\begin{theorem}
\label{thm:PFkDgrid}
Let $(G,\mathcal{O})$ be the oriented Cartesian product of the graphs $(P_{m_{1}}\square P_{m_{2}},\mathcal{O}_{m_{1},m_{2}}), \allowbreak (P_{m_3}, \mathcal{O}_{m_3}), \allowbreak \dots,(P_{m_d},\mathcal{O}_{m_d})$, where $m_1,\dots, m_\ell$ are odd. Let vertex weights be $x$ for all vertices of $G$ and edge weights be $a_i$ for the $P_{m_i}$-edges. 
Then the partition function of the monopole-dimer model on $G$ is given by
\begin{align}
\label{eqn:PFofkGrid}
    \mathcal{Z}_{G}\equiv\mathcal{Z}_{m_1,\dots,m_d}=
    \prod_{S \subseteq [\ell]} (T_S)^{2^{d-1-\# S}}, 
\end{align}
where for $S=[d]\setminus\{p_1,\dots,p_{r}\}$,
\begin{align*}
T_S = \prod_{i_{p_1}=1}^{\lfloor \frac{m_{p_1}}{2} \rfloor}\cdots \prod_{i_{p_{r}}=1}^{\lfloor \frac{m_{p_{r}}}{2} \rfloor} 
\left( x^2+\sum_{q=1}^{r} 4a_s^2\cos^2{\frac{i_{p_q}\pi }{m_{p_q}+1}} \right),
\end{align*}
and when $\ell = d$, the empty product in $T_{[d]}$ must be interpreted as $x^2$.
\end{theorem}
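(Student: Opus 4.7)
The proof generalises that of \cref{thm:PF3Dgrid} to arbitrary $d$ by a recursive block diagonalisation. The plan is to (i) express the signed adjacency matrix $\mathcal{K}_{m_1,\ldots,m_d}$ as a sum of $d$ tensor-product terms, (ii) diagonalise all tridiagonal Toeplitz factors simultaneously by a single unitary conjugation, and (iii) peel off positions one at a time from the outside in, maintaining a running ``boundary scalar'' $C$.

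Under the $d$-dimensional boustrophedon labelling extending \cref{Rem:boustro_label}, one verifies by induction on $d$ that
\[
\mathcal{K}_{m_1,\ldots,m_d} = \sum_{i=1}^d \Bigl(\bigotimes_{j>i} I_{m_j}\Bigr) \otimes T_{m_i}(-a_i,\, x\,\delta_{i,1},\, a_i) \otimes \Bigl(\bigotimes_{j<i} J_{m_j}\Bigr),
\]
with the diagonal entry $x$ appearing only in the $i=1$ summand. Conjugating by $U = u_{m_d} \otimes \cdots \otimes u_{m_1}$ turns each $T_{m_i}$ into the diagonal matrix $D^{(i)}$ of eigenvalues $\mu^{(i)}_k = 2\iota a_i \cos(k\pi/(m_i+1))$, and each $J_{m_j}$ into the antidiagonal matrix $A_{m_j}$ from~\eqref{Eqn:UTforToeplitz}. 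Since the outermost factor is then diagonal in every summand, the conjugated matrix block-diagonalises into $m_d$ blocks $F_{j_d}$; for odd $m_d$ the middle index $j_d=(m_d+1)/2$ has $\mu^{(d)}_{j_d}=0$, collapsing $F_{j_d}$ to $\mathcal{K}_{m_1,\ldots,m_{d-1}}$, while for all other $j_d$ the block equals $\mathcal{K}_{m_1,\ldots,m_{d-1}}$ plus a boundary term $\mu^{(d)}_{j_d} \cdot A_{m_{d-1}} \otimes \cdots \otimes A_{m_1}$, and the pair symmetry $\mu^{(d)}_{j_d} = -\mu^{(d)}_{m_d+1-j_d}$ gives equal block determinants within each pair (a factor-of-two doubling).

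Iterating, at each position $j \in \{d-1,\ldots,2\}$ the accumulated boundary scalar $C_j$ combines with $D^{(j)}$ to form $M^{(j)}_{C_j} := D^{(j)} + C_j A_{m_j}$. Simultaneous row-column interchanges pairing $k \leftrightarrow m_j+1-k$ reduce $M^{(j)}_{C_j}$ to $2\times 2$ trace-zero blocks of determinant $\lambda_j^2 := 4a_j^2\cos^2(k_j\pi/(m_j+1)) + \lambda_{j+1}^2$ (with $\lambda_{d+1}^2=0$), whose paired eigenvalues $\pm\iota\lambda_j$ again yield a doubling; when $m_j$ is odd, a further $1\times 1$ middle block has eigenvalue $C_j$ and so passes the boundary scalar through unchanged, without incrementing $\lambda^2$. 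Finally at position $1$, combining $xI + D^{(1)}$ with $C_1$ produces $2\times 2$ blocks $Y^{\pm}_{k_1}$ of determinant $x^2 + 4a_1^2\cos^2(k_1\pi/(m_1+1)) + \lambda_2^2$ (independent of the sign in $C_1$), together with --- for odd $m_1$ --- an extra $1\times 1$ block of value $x + C_1$ which combines with a preceding $\pm$ at level $2$ to contribute $x^2+\lambda_2^2$.

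To assemble the product, index the block contributions by the subset $S \subseteq [\ell]$ of odd dimensions at which the middle eigenvalue propagates (rather than a pair): middle choices at $j\in S$ omit the corresponding $4a_j^2\cos^2(k_j\pi/(m_j+1))$ term from the $\lambda$ accumulation and forfeit the associated doubling. The remaining pair indices $k_j$ for $j \notin S$ then yield exactly the factor $T_S^{2^{d-1-|S|}}$, where the multiplicity $2^{d-1-|S|}$ arises from the $d-1$ doubling levels (positions $2$ through $d$) reduced by $|S|$. The extreme case $S=[d]$, which occurs only when $\ell=d$, collapses the entire recursion to the single $(1,1)$-entry $x$ of $D^{(1)}+xI$, matching the stated convention that $T_{[d]}=x^2$ with exponent $2^{-1}$ yields the factor $x$. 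The principal technical obstacle is sign bookkeeping: one must track the $\iota^{m_j-1}$ prefactors in each $A_{m_j}$ carefully so that the paired $2\times 2$ determinants evaluate to $+\lambda_j^2$ (a nonnegative sum of squares of cosines), and verify that middle eigenvalues at odd levels transmit the boundary scalar faithfully --- both of which are direct extensions of the computations carried out in the $d=3$ proof.
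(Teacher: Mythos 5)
Your proposal follows essentially the same route as the paper's proof: the same tensor-product decomposition of $\mathcal{K}_G$ with the boustrophedon labelling, the same conjugation by $u_{m_d}\otimes\cdots\otimes u_{m_1}$, the same outside-in peeling in which even (or paired) indices produce a determinant doubling while the middle index of each odd $m_j$ transmits the accumulated boundary scalar unchanged, and the same assembly of the factors $T_S^{2^{d-1-\#S}}$ indexed by the subsets $S\subseteq[\ell]$ of odd directions at which the middle eigenvalue propagates. The sketch is correct, and the sign bookkeeping you flag is indeed handled exactly as in the $d=3$ computation.
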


Note that if $\ell=d$ then the term in \eqref{eqn:PFofkGrid} corresponding to $S = [d]$ is just $x$ which is expected since each configuration will have at least one monomer.
The proof strategy is similar to that of \cite[Section 4]{hammersley-menon-1970}.
Using ideas similar to the proof of \cref{prop:ASSOCofPlPmPn}, it can be shown that for $s \in [d-1]$, the formula above coincides with the partition function of the monopole-dimer model on the oriented Cartesian product $P_{m_1}\square P_{m_2}\square\cdots\square P_{m_{s-1}}\square  (P_{m_{s}}\square P_{m_{s+1}})\square P_{m_{s+2}}\square\cdots\square P_{m_{d}}$.
{We first demonstrate the strategy of proof in an example below}.

\begin{example}
{Consider the $4$-dimensional oriented hypercube, $Q_{4}$, built as an oriented Cartesian product of $4$ copies of $(P_2,\mathcal{O}_2)$  as in \cref{def:ortd Cart}.  }
Then the generalised adjacency matrix is
\begin{align*}
\mathcal{K}_G&=I_{2} \otimes I_{2} \otimes I_{2}\otimes T_{2}(-a_1,x,a_1)
+I_{2}  \otimes I_{2}\otimes T_{2}(-a_2,0,a_2)\otimes J_{2} 
\\
&+I_{2}  \otimes  T_{2}(-a_3,0,a_3)\otimes J_{2}\otimes J_{2}
+T_{2}(-a_4,0,a_4)\otimes J_{2}\otimes J_{2}\otimes J_{2}.
\end{align*}
Let $u_k$ be as defined in the proof of~\cref{thm:PF3Dgrid}. Then, using the unitary transform $u_{2}\otimes u_{2}\otimes u_{2} \otimes u_{2}$,  
we see that 
\begin{align*}
  \mathcal{K}_G &\sim I_{2} \otimes I_{2} \otimes I_{2} \otimes \begin{pmatrix}
   x+\iota a_1& 0\\
    0& x-\iota a_1
  \end{pmatrix} 
  +I_{2} \otimes I_{2} \otimes \begin{pmatrix}
   \iota a_2& 0\\
    0& -\iota a_2
  \end{pmatrix}\otimes \iota\begin{pmatrix}
   0 &-1\\
   1& 0 
  \end{pmatrix}
  \\&+I_{2} \otimes \begin{pmatrix}
   \iota a_3& 0\\
    0& -\iota a_3
  \end{pmatrix}\otimes \iota\begin{pmatrix}
   0 &-1\\
   1& 0 
  \end{pmatrix}\otimes \iota\begin{pmatrix}
   0 &-1\\
   1& 0 
  \end{pmatrix}
  \\&
  +\begin{pmatrix}
   \iota a_4& 0\\
    0& -\iota a_4
  \end{pmatrix}\otimes \iota\begin{pmatrix}
   0 &-1\\
   1& 0 
  \end{pmatrix}\otimes \iota\begin{pmatrix}
   0 &-1\\
   1& 0 
  \end{pmatrix}\otimes \iota\begin{pmatrix}
   0 &-1\\
   1& 0 
  \end{pmatrix}.
\end{align*}
Define, for $1 \leq i_4 \leq 2$,
\begin{align*}
F_{i_4}&=I_{2} \otimes I_{2} \otimes \begin{pmatrix}
   x+\iota a_1& 0\\
    0& x-\iota a_1
  \end{pmatrix} 
  + I_{2} \otimes \begin{pmatrix}
   \iota a_2& 0\\
    0& -\iota a_2
  \end{pmatrix}\otimes \iota\begin{pmatrix}
   0 &-1\\
   1& 0 
  \end{pmatrix}
  \\&+ \begin{pmatrix}
   \iota a_3& (-1)^{i_4-1} a_4\\
    (-1)^{i_4}a_4& -\iota a_3
  \end{pmatrix}\otimes \iota\begin{pmatrix}
   0 &-1\\
   1& 0 
  \end{pmatrix}\otimes \iota\begin{pmatrix}
   0 &-1\\
   1& 0 
  \end{pmatrix},
  \end{align*}
and note that $\det \mathcal{K}_G = \det F_{1} \det F_{2}$.
Now
    \begin{align*}
   F_{1} \sim  F_{2} \sim  &\, I_{2} \otimes I_{2} \otimes \begin{pmatrix}
   x+\iota a_1& 0\\
    0& x-\iota a_1
  \end{pmatrix} 
  + I_{2} \otimes \begin{pmatrix}
   \iota a_2& 0\\
    0& -\iota a_2
  \end{pmatrix}\otimes \iota\begin{pmatrix}
   0 &-1\\
   1& 0 
  \end{pmatrix}
  \\&+ \begin{pmatrix}
   \iota\sqrt{a_3^2+a_4^2}& 0\\
    0& -\iota\sqrt{a_3^2+a_4^2}
  \end{pmatrix}\otimes \iota\begin{pmatrix}
   0 &-1\\
   1& 0 
  \end{pmatrix}\otimes \iota\begin{pmatrix}
   0 &-1\\
   1& 0 
  \end{pmatrix},
 \end{align*}
and thus both $F_1$ and $F_2$ have same the determinant. Hence $\det \mathcal{K}_G=\det F_{1}^2$.
Iterating the same procedure two more times, we get
\[
\det \mathcal{K}_G=\det \begin{pmatrix}
   x+\iota a_1&\sqrt{a_2^2+a_3^2+a_4^2}\\
   -\sqrt{a_2^2+a_3^2+a_4^2}& x-\iota a_1 
  \end{pmatrix}^8.
\]
Thus, the partition function of the monopole-dimer model on $Q_4$ is given by
\[
\mathcal{Z}_{Q_4} = (x^2+a_1^2+a_2^2+a_3^2+a_{4}^2)^8.
\]
\end{example}
\begin{proof}[Proof of \cref{thm:PFkDgrid}]
Using \cref{thm:DetFor2}, the partition function is the determinant of the generalised adjacency matrix, $\mathcal{K}_G$, of $(G,\mathcal{O})$ with the boustrophedon labelling, as discussed in \cref{Rem:boustro_label}. 
It will be convenient for us to index the components in the tensor factors in decreasing order.
Let 
\[
M_j^d= \begin{cases}
I_{m_d} \otimes\dots\otimes I_{m_{2}}\otimes T_{m_1}(-a_1,x,a_1) &  j = 1 \\
I_{m_d} \otimes\dots\otimes I_{m_{j+1}}\otimes T_{m_j}(-a_j,0,a_j) \otimes J_{m_{j-1}}\otimes \dots \otimes J_{m_{1}} & 2\leq j\leq d,
\end{cases}
\]
where $T_k(-s,z,s)$ and $J_k$ are defined in the proof of \cref{thm:PF3Dgrid}.
Then $\mathcal{K}_G$ can be written as
\begin{equation}
    \mathcal{K}_G = M_1^d+\dots +M_d^d.
\end{equation}
For $ j\in [d]$, define the $m_j\times m_j$ diagonal matrix
\begin{equation*}
D_j = {\mathrm{diag} \left(2\iota a_j\cos{\frac{\pi}{m_j+1}},\dots, 2\iota a_j\cos{\frac{m_j\pi}{m_j+1}} \right)}
\end{equation*}
and antidiagonal matrix
\begin{equation*}
J_j^{\prime} = \iota^{m_{j}-1}\begin{pmatrix}
   & &(-1)^{m_{j}-1}\\
     &\iddots &\\
    (-1)^0& & 
  \end{pmatrix}.
\end{equation*}
Let
\begin{align*}
K_j^d= \begin{cases}
I_{m_d} \otimes\dots\otimes I_{m_{2}}\otimes (xI_{m_1}+D_1) &  j = 1 \\
I_{m_d} \otimes\dots\otimes I_{m_{j+1}}\otimes D_j \otimes J_{j-1}^{\prime}\otimes \dots \otimes J_{1}^{\prime} &   2\leq j\leq d.
\end{cases}
\end{align*} 
We again use $\sim$ for the equivalence relation on matrices denoting similarity. Let $u_k$ be as defined in the proof of~\cref{thm:PF3Dgrid}. Then, using the unitary transform $u_{m_d}\otimes\dots \otimes u_{m_1}$,  
we see that $\mathcal{K}_G \sim K_1^d +\cdots+K_{d}^d$.
  Let 
  \[
  \lambda_{i_d, i_{d-1},\dots,i_{p}} =\sqrt{\sum_{s=p}^{d}4a_{s}^2 \cos^2{\frac{i_{s}\pi}{m_{s}+1}}}, 
  1\leq p\leq d.
  \]
Since the first matrix in each tensor product $K_j^d$ is diagonal, $\mathcal{K}_G$ is similar to an $m_d\times m_d$ block diagonal matrix with the block $F_{i_d}$, $i_d\in [m_d]$, given by
\begin{align*}
F_{i_d} &= K_1^{d-1}+\cdots+K_{d-2}^{d-1}+
\left( D_{d-1}+2\iota a_{d}\cos{\frac{i_d\pi}{m_d+1}}J_{d-1}^{\prime} \right) 
\otimes J_{d-2}^{\prime}\otimes \dots
  \otimes 
    J_{1}^{\prime}.
  \end{align*}
Diagonalizing the matrix
\[
D_{d-1}+2\iota a_{d}\cos{\frac{i_d\pi}{m_d+1}}J_{d-1}^{\prime}
\]
leads to the matrix
\[
\begin{cases}
\mathrm{diag}(\iota\lambda_{i_d,1},-\iota \lambda_{i_d,1},\iota\lambda_{i_d,2},-\iota \lambda_{i_d,2},\dots,\iota\lambda_{i_d,\lfloor\frac{m_{d-1}}{2}\rfloor},-\iota \lambda_{i_d,\lfloor\frac{m_{d-1}}{2}\rfloor}) &  
\hspace*{-0.2cm} \text{$m_{d-1}$ even,}\\
\mathrm{diag}(\iota\lambda_{i_d,1},-\iota \lambda_{i_d,1},\iota\lambda_{i_d,2},-\iota \lambda_{i_d,2},\dots,\iota\lambda_{i_d,\lfloor\frac{m_{d-1}}{2} \rfloor},-\iota\lambda_{i_d,\lfloor\frac{m_{d-1}}{2}\rfloor},2a_k \iota \cos{\frac{i_d\pi}{m_d+1}}) & \text{$m_{d-1}$ odd.}
\end{cases}
\]

Set $F_{i_d}^{+}=F_{i_d}$, $F_{i_d}^{-}=F_{m_d-i_d+1}$ for $1\leq i_{d}\leq \lfloor\frac{m_{d}}{2}\rfloor$, and observe that $F_{i_d}^{+}\sim F_{i_d}^{-}$. 
Since the determinant of a matrix is invariant under similarity transformation,
the partition function can now be calculated as
\begin{equation}
\label{eqn:oneProdKD}
\mathcal{Z}_{G}=\prod_{i_d=1}^{\lfloor\frac{m_{d}}{2}\rfloor} (\det{F_{i_d}^{+}})^2\times\begin{cases}
1& \text{$m_{d}$ even,}\\
\det{F_{\frac{m_d+1}{2}}} &\text{$m_{d}$ odd.}
\end{cases}    
\end{equation}

Let us first assume $\ell < d$, i.e. $m_d$ is even.
Repeating the above idea, $F_{i_d}^{+}$ is similar to an $m_{d-1} \times m_{d-1}$ block diagonal matrix with blocks $F^{\pm}_{i_d,i_{d-1}}$ for $1\leq i_{d-1}\leq \lfloor\frac{m_{d-1}}{2}\rfloor$ and continue this process. 
Inductively, we obtain
\begin{equation}
\label{eqn:mlProduct}
 \mathcal{Z}_{G}=\prod_{i_d=1}^{\frac{m_{d}}{2}}\cdots\prod_{i_{\ell+1}=1}^{\frac{m_{\ell+1}}{2}} (\det{F^{+}_{i_d,i_{d-1},\dots,i_{\ell+1}}}\det{F^{-}_{i_d,i_{d-1},\dots,i_{\ell+1}}})^{2^{d-\ell-1}},    
\end{equation}
with 
\begin{align*}
 F^{\pm}_{i_d,i_{d-1},\dots,i_{\ell+1}}&= K_1^{\ell}+\cdots+K_{\ell-1}^{\ell}+ \big(D_{\ell}\pm \iota\lambda_{i_d,\dots,i_{\ell+1}}J_{\ell}^{\prime}\big)\otimes J_{\ell-1}^{\prime}\otimes \dots
  \otimes 
    J_{1}^{\prime}.
\end{align*}
Again diagonalizing, the matrix
\[
D_{\ell}\pm \iota\lambda_{i_d,\dots,i_{\ell+1}}J_{\ell}^{\prime}
\]
is similar to
\[
\mathrm{diag}\Big(\iota\lambda_{i_d,\dots,i_{\ell+1},1},-\iota \lambda_{i_d,\dots,i_{\ell+1},1},
\dots,\iota\lambda_{i_d,\dots,i_{\ell+1},\lfloor\frac{m_{\ell}}{2} \rfloor},-\iota \lambda_{i_d,\dots,i_{\ell+1},\lfloor\frac{m_{\ell}}{2} \rfloor},\pm \iota\lambda_{i_d,\dots,i_{\ell+1},\frac{m_{\ell}+1}{2}}\Big).
\]
Therefore,
\begin{align}
\label{eqn:IntBlocks}
    \det{F^{+}_{i_d,i_{d-1},\dots,i_{\ell+1}}}&=\det{F^{+}_{i_d,\dots,i_{\ell+1},\frac{m_{\ell}+1}{2}}}\prod_{i_{\ell}=1}^{\lfloor m_{\ell}/2\rfloor}(\det{F^{+}_{i_d,\dots,i_{\ell+1},i_{\ell}}}\det{F^{-}_{i_d,\dots,i_{\ell+1},i_{\ell}}}),
\end{align}
and
\begin{align}
\label{eqn:IntBlocks2}
  \det{F^{-}_{i_d,i_{d-1},\dots,i_{\ell+1}}}&=\det{F^{-}_{i_d,\dots,i_{\ell+1},\frac{m_{\ell}+1}{2}}}\prod_{i_{\ell}=1}^{\lfloor m_{\ell}/2\rfloor}(\det{F^{+}_{i_d,\dots,i_{\ell+1},i_{\ell}}}\det{F^{-}_{i_d,\dots,i_{\ell+1},i_{\ell}}}),
\end{align}
where 
\begin{align*}
 F^{\pm}_{i_d,i_{d-1},\dots,i_{\ell}}&= K_1^{\ell-1}+\cdots+K_{\ell-2}^{\ell-1}+ \big(D_{\ell-1}\pm \iota\lambda_{i_d,\dots,i_{\ell}}J_{\ell-1}^{\prime}\big)\otimes J_{\ell-2}^{\prime}\otimes \dots
  \otimes 
    J_{1}^{\prime}.
\end{align*}
Substituting \eqref{eqn:IntBlocks} and \eqref{eqn:IntBlocks2} in \eqref{eqn:mlProduct} gives
\begin{multline*}
 \mathcal{Z}_{G}=\prod_{i_d=1}^{\frac{m_{d}}{2}}\cdots\prod_{i_{\ell}=1}^{\frac{m_{\ell}}{2}} (\det{F^{+}_{i_d,i_{d-1},\dots,i_{\ell}}}\det{F^{-}_{i_d,i_{d-1},\dots,i_{\ell}}})^{2^{d-\ell}}\\
\prod_{i_d=1}^{\frac{m_{d}}{2}}\cdots\prod_{i_{\ell+1}=1}^{\frac{m_{\ell+1}}{2}} \Big(\det{F^{+}_{i_d,\dots,i_{\ell+1},\frac{m_{\ell}+1}{2}}}\det{F^{-}_{i_d,\dots,i_{\ell+1},\frac{m_{\ell}+1}{2}}}\Big)^{2^{d-\ell-1}}    
\end{multline*}
By repeated application of this procedure, we will get 
\begin{equation*}
\mathcal{Z}_{G}=\prod_{i_d=1}^{\frac{m_{d}}{2}}\dots \prod_{i_1=1}^{\lfloor\frac{m_{1}}{2}\rfloor} (\det{F^{+}_{i_d,\dots,i_1}}\det{F^{-}_{i_d,\dots,i_1}})^{2^{d-1}}    \prod_{\substack{S\subset [\ell]\\|S|=1}}T_{S}^{2^{d-2}}\prod_{\substack{S\subset [\ell]\\|S|=2}}T_{S}^{2^{d-3}}\cdots \prod_{\substack{S\subset [\ell]\\|S|=\ell}}T_{S}^{2^{d-\ell-1}},
\end{equation*}
where $F^{\pm}_{i_d,\dots,i_1}$ is the $1 \times 1$ matrix
\[
\begin{pmatrix}
  \ds \pm \iota \sqrt{x^2+\sum_{s=1}^{d}4a_{s}^2 \cos^2{\frac{i_{s}\pi}{m_{s}+1}}}
\end{pmatrix}.
\]
Thus, we finally arrive at 
\begin{equation}
\label{eqn:PFwhnlneqd}
\mathcal{Z}_{G}=\prod_{i_d=1}^{\frac{m_{d}}{2}}\dots \prod_{i_1=1}^{\lfloor\frac{m_{1}}{2}\rfloor} \Big(x^2+\sum_{s=1}^{d}4a_{s}^2 \cos^2{\frac{i_{s}\pi}{m_{s}+1}}\Big)^{2^{d-1}}    \prod_{\substack{S\subset [\ell]\\|S|=1}}T_{S}^{2^{d-2}}\prod_{\substack{S\subset [\ell]\\|S|=2}}T_{S}^{2^{d-3}}\cdots \prod_{\substack{S\subset [\ell]\\|S|=\ell}}T_{S}^{2^{d-\ell-1}}.
\end{equation}
The last case is when $\ell=d$. Then the right hand side of~\eqref{eqn:PFwhnlneqd} will have an additional factor of $\det{F_{\frac{m_d+1}{2}}}$, and the latter is the generalised adjacency matrix for the oriented Cartesian product $(P_{m_{1}}\square P_{m_{2}},\mathcal{O}_{m_{1},m_{2}}) \square \allowbreak (P_{m_3}, \mathcal{O}_{m_3}) \square  \cdots \square (P_{m_{d-1}},\mathcal{O}_{m_{d-1}})$. Thus, 
 \begin{align*}
\det{F_{\frac{m_d+1}{2}}} =  \prod_{S \subseteq [d-1]} (T_S)^{2^{d-2-\# S}} 
= \prod_{\substack{\emptyset \subsetneq S \subseteq [d]\\d\in S}} (T_S)^{2^{d-1-\# S}},
 \end{align*}
by induction.
Substituting this in \eqref{eqn:oneProdKD} completes the proof.
\end{proof}

As for the three-dimensional case, it is not obvious from the formula \eqref{eqn:PFofkGrid} for $\mathcal{Z}_{m_1,\dots,m_d}$ that it is a polynomial with nonnegative integer coefficients.
The formula is also symmetric under any permutation of $(a_1,m_1), \dots, (a_k,m_d)$.
Finally, \eqref{eqn:PFofkGrid} tells that the partition function of the monopole-dimer model for even grid lengths is the $2^{(d-1)}$'th power of a polynomial.
Again a combinatorial interpretation of the underlying polynomial would be interesting. 

We end this section with an example for a well-studied family of graphs.

\begin{example}
Consider the $d$-dimensional oriented hypercube, $Q_{d}$, built as an oriented Cartesian product of $d$ copies of $(P_2,\mathcal{O}_2)$  as in \cref{def:ortd Cart}.  
Then the partition function of the monopole-dimer model on $Q_d$ is given by
\[
\mathcal{Z}_{Q_d} = (x^2+a_1^2+\cdots+a_{d}^2)^{2^{d-1}}.
\]
{While this formula is amazingly simple, it is a result of a lot of cancellation of terms. Finding a combinatorial interpretation of this formula would certainly be very interesting. An interpretation in the two-dimensional case has been given in \cite{ayyer-2020}.}
\end{example}

\section{Asymptotic Behaviour}
\label{sec:asym}

{It is natural to ask how fast the partition function of these monopole-dimer models on grid graphs grows as the size increases. We are also interested in understanding the `probability' of seeing a monopole at a given vertex or a dimer at a given edge. The reason these are not strict probabilities is that we are working with signed measures.
As a warm-up, we begin with three-dimensional grids in \cref{sec:asym-3d}. We then move on the general $d$-dimensional grids in \cref{sec:asym-gend}, where the formulas are not as explicit.}
We will follow the strategy in~\cite[Section 5]{Ayyer2015ASM}. 

\subsection{Three-dimensional grids}
\label{sec:asym-3d}

Define the \emph{free energy}
as
\begin{equation*}
    \Phi_3(a,b,c,x):=\lim_{\ell,m,n\rightarrow \infty}\frac{1}{8\ell mn}\ln{\,\mathcal{Z}_{2\ell,2m,2n}}.
\end{equation*}
Using the product formula in \cref{thm:PF3Dgrid},
\begin{multline*}
\Phi_3(a,b,c,x)=\lim_{\ell,m,n\rightarrow \infty}\frac{1}{8\ell mn} \\
\times \sum\limits_{j=0}^{n}\sum\limits_{s=0}^{m} \sum\limits_{k=0}^{\ell} \ln\bigg(x^2+4a^2\cos^2{\frac{\pi k}{2\ell+1}}+4b^2\cos^2{\frac{\pi s}{2m+1}}+4c^2\cos^2{\frac{\pi j}{2n+1}}\bigg )^4.
\end{multline*}
Note that the right hand side can be expressed as a Riemann sum.
Therefore,
\begin{equation*}
    \Phi_3(a,b,c,x)=\frac{4}{\pi^3}\int\limits_0^{\pi/2}\int\limits_0^{\pi/2} \int\limits_0^{\pi/2} \ln({x^2+4a^2\cos^2{\theta}+4b^2\cos^2{\phi}+4c^2\cos^2{\psi})} \, \text{d}{\theta} \, \text{d}{\phi} \, \text{d}{\psi}.
\end{equation*}
Hence, the \emph{density} of $a$-type edges and of monopoles will be
\begin{align*}
    \rho_{3,a}&:=a\frac{\partial}{\partial a} \Phi_3=\frac{4}{\pi^3}\int\limits_0^{\pi/2}\int\limits_0^{\pi/2} \int\limits_0^{\pi/2} \frac{8a^2\cos^2{\theta}}{({x^2+4a^2\cos^2{\theta}+4b^2\cos^2{\phi}+4c^2\cos^2{\psi})}} \, \text{d}{\theta} \, \text{d}{\phi} \, \text{d}{\psi}, \\
    \rho_{3,x}&:=x\frac{\partial}{\partial x} \Phi_3=\frac{4}{\pi^3}\int\limits_0^{\pi/2}\int\limits_0^{\pi/2} \int\limits_0^{\pi/2} \frac{2x^2}{({x^2+4a^2\cos^2{\theta}+4b^2\cos^2{\phi}+4c^2\cos^2{\psi})}} \, \text{d}{\theta} \, \text{d}{\phi}\, \text{d}{\psi},
\end{align*}
respectively.
Similarly, the density of $b$- and $c$-type dimers can be defined and 
one can check that $\rho_{3,a}+\rho_{3,b}+\rho_{3,c}+\rho_{3,x}=1$ as expected.

Recall the \emph{elliptic integral of the first kind}, 
$$
F(\phi,k)=\int\limits_{0}^{\phi} \frac{\text{d}\alpha}{\sqrt{1-k^2\sin^2{\alpha}}},
$$
and the \emph{elliptic integral of the second kind}, 
$$
E(\phi,k)=\int\limits_{0}^{\phi} \sqrt{1-k^2\sin^2{\alpha}}\,\text{d}\alpha.
$$ 
The \emph{complete elliptic integral of the first kind} is $K(k) = F(\pi/2,k)$ and the \emph{complete elliptic integral of the second kind} is
 $E(k) = E(\pi/2,k)$. Then, the \emph{Jacobi zeta function} is 
$$
Z(\phi,k)=E(\phi,k)-\frac{E(k)}{K(k)}F(\phi,k).
$$
See Gradshteyn and Rizhik~\cite{TableOfIntegrals2000} for basic properties of elliptic integrals.

Now performing similar calculations as in~\cite{Ayyer2015ASM} using
\begin{align*}
\epsilon_3 =\tan^{-1}{\bigg(\frac{\sqrt{x^2+4c^2\cos^2{\psi}+4b^2}}{2a}\bigg)}
\end{align*}
and
\begin{align*}
q_3 =\frac{4ab}{\sqrt{(x^2+4c^2\cos^2{\psi}+4a^2)(x^2+4c^2\cos^2{\psi}+4b^2)}},
\end{align*}    
we get,
\begin{align}
    \rho_{3,a} =& 1-\frac{2}{\pi}\int \limits_0^{\pi/2}\Lambda_{0}(\epsilon_3, \sin^{-1}{q_3}) \, d\psi, \\
\label{eqn:Densities3D}
    \rho_{3,x} =& \frac{x^2}{\pi^2 ab}\int \limits_0^{\pi/2}q_3\,K(q_3) \, \text{d}\psi,
\end{align}
where $\Lambda_{0}(\theta,y)$ is the \emph{Heuman lambda function}~\cite[~Formula 17.4.39]{AandS-1964} defined as
\begin{equation*}
    \Lambda_{0}(\theta,y)
    =\frac{F(\phi, \cos{y})}{K(\cos{y})}+\frac{2}{\pi}K(\sin{y})Z(\phi,\cos{y}).
\end{equation*}
Let us now calculate the monopole density for the three dimensional case when all the vertex and edge weights are $1$. Using \eqref{eqn:Densities3D}, we get
\[
\rho_{3,x} = \frac{1}{\pi^2}\int \limits_0^{\pi/2}\frac{4}{5+4\cos^2{\psi}}\,K\Big(\frac{4}{5+4\cos^2{\psi}}\Big) \, \text{d}\psi
\approx 0.1705.
\]

\subsection{$d$-dimensional grids}
\label{sec:asym-gend}

We now move on to the case of $d$-dimensional grid graphs where all side lengths are even, vertex weights are $x$ and edges along the $j$'th direction have weight $a_j$. The free energy is given by
\begin{equation*}
    \Phi_d(a_1,\dots,a_d,x):=\lim_{m_1,\dots,m_d\rightarrow \infty}\frac{1}{2^dm_1\cdots m_d}\ln{\,\mathcal{Z}_{2m_1,\dots,2m_d}}.
\end{equation*}
The product formula in \cref{thm:PFkDgrid} together with the Riemann sum implies that 
\begin{equation*}
    \Phi_d(a_1,\dots,a_d,x)=\frac{2^{d-1}}{\pi^d}\int\limits_0^{\pi/2}\cdots\int\limits_0^{\pi/2} \ln\bigg({x^2+\sum_{s=1}^d 4a_s^2\cos^2{\theta_s}\bigg)} \,\text{d}{\theta_{1}} \dots \text{d}{\theta_{d}}.
\end{equation*}
Again defining the densities of monopoles and $s$-type edges for $s \in [d]$ as
\[
\rho_{d,x} :=x\frac{\partial}{\partial x} \Phi_d, \quad 
\rho_{d,a_s} := a_s \frac{\partial}{\partial a_s} \Phi_d,
\]
one can again get that $\rho_{d,x} + \sum_{s=1}^d\rho_{d,a_s}=1$.
Following the strategy in~\cite{Ayyer2015ASM}, we define
\begin{align*}
    \epsilon_d &=\tan^{-1} \Bigg(\frac{\sqrt{x^2 +4a_2^2 +\sum_{s=3}^d 4a_s^2\cos^2{\theta_s}}}{2a_1}\Bigg), \\
    q_d &=\frac{4a_1a_2}{
    \sqrt{(x^2 +4a_1^2 +\sum_{s=3}^d 4a_s^2\cos^2{\theta_s})
    (x^2 +4a_2^2 +\sum_{s=3}^d 4a_s^2\cos^2{\theta_s})}},
    \end{align*}
and we get
\begin{equation*}
\begin{split}
    \rho_{d,a_1} =& 1-\frac{2^{d-2}}{\pi^{d-2}}\int\limits_0^{\pi/2}\cdots\int\limits_0^{\pi/2}\Lambda_{0}(\epsilon_d, \sin^{-1}{q_d})\, \text{d}{\theta_{3}} \dots \text{d}{\theta_{d}}, \\
    \rho_{d,x} =& \frac{2^{d-3}x^2}{\pi^{d-1} a_1a_2}\int\limits_0^{\pi/2}\cdots\int\limits_0^{\pi/2}q_d \,K(q_d)\, \text{d}{\theta_{3}} \dots \text{d}{\theta_{d}}.
\end{split}
\end{equation*}

\begin{figure}[h!]
    \centering
    \includegraphics[scale=0.8]{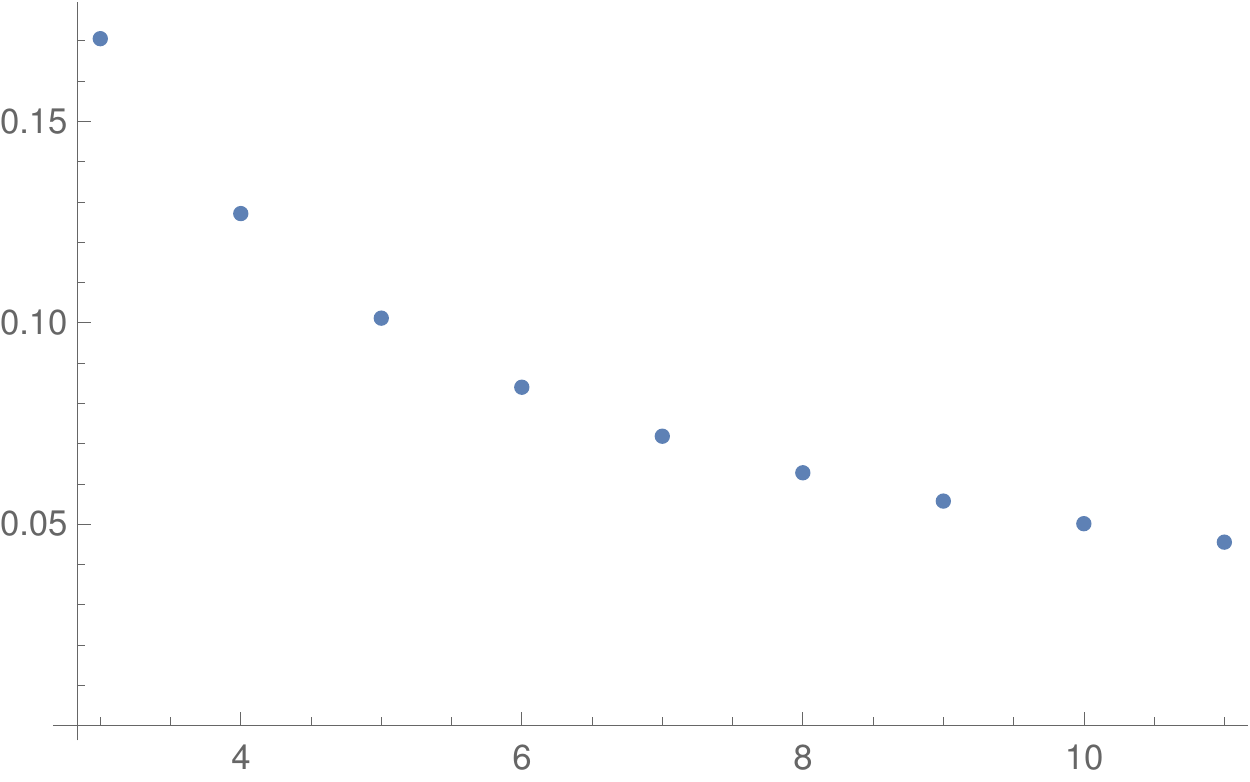}
    \caption{Monopole densities $\rho_{d,x}$ for limiting grid graphs in dimensions $d$ ranging from 3 to 11 when all the vertex and edge weights are $1$.}
    \label{fig:MonopoledensitiesD3to11}
\end{figure}

\cref{fig:MonopoledensitiesD3to11} shows the numerically evaluated monopole density $\rho_{d,x}$ for the first few dimensions when all the vertex and edge weights are $1$.
Observe that $\rho_{d,x}$ seems to decrease monotonically as dimension increases. It would be interesting to determine the limit of $\rho_{d,x}$ as $d$ tends to infinity, if it exists. In particular, it is not clear whether this limit is 0 or not.

\section*{Acknowledgements}
We thank the referees for many helpful comments and suggestions.

\bibliographystyle{alphaurl}
\bibliography{bib}

\end{document}